\theoremstyle{definition}
\newtheorem{Theorem}{Theorem}[section]
\newtheorem{Lemma}[Theorem]{Lemma}
\newtheorem{Example}[Theorem]{Example}
\newtheorem{Remark}[Theorem]{Remark}
\newtheorem{Definition}[Theorem]{Definition}
\newtheorem{Corollary}[Theorem]{Corollary}
\newtheorem{Proposition}[Theorem]{Proposition}
\newcommand{\N}{\mathbb{N}}
\newcommand{\Z}{\mathbb{Z}}
\newcommand{\Q}{\mathbb{Q}}
\newcommand{\mc}[1]{\mathcal{#1}} 
\newcommand{\mb}[1]{\mathbb{#1}} 
\newcommand{\mt}[1]{\text{#1}}
\begin{document}

\title{Corrigendum to ``Generators of the Hecke algebra of $(S_{2n},B_n)$''}

\author[1]{Mahir Bilen Can}
\author[2]{\c{S}afak \"Ozden}
\affil[1]{{\small Tulane University, New Orleans; mcan@tulane.edu}}
\affil[2]{{\small Mimar Sinan G\"uzel Sanatlar Universitesi, Istanbul; safak.ozden@msgsu.edu.tr}}
\normalsize

\date{June 1, 2014}
\maketitle

\begin{abstract}
In \cite{AC12}, among other things, we observed that the structure 
constants of the Hecke algebra of the Gel'fand pair $(S_{2n},B_n)$ are polynomials 
in $n$. It is brought to attention by Omar Tout that there is a missing argument in its 
proof. Here we provide the details of the missing argument by further analyzing 
various actions of the hyperoctahedral group. 
\\ \\
\textbf{Keywords:} Farahat-Higman rings, structure constants, $B_n$-conjugacy classes
\end{abstract}

\section{Introduction}

The hyperoctahedral group $B_n$ is the centralizer of the permutation 
\begin{align}\label{A:tn}
t_n= (1 2) (3 4)\cdots (2n-1 \: 2n) 
\end{align}
in the symmetric group $S_{2n}$. 
Here, $(2i-1\, 2i)$ stands for the cycle that interchanges $2i-1$ with $2i$.
The Hecke algebra of the pair $(S_{2n},B_n)$, 
denoted by $\mt{H}_n$, is the convolution algebra of integer valued functions on 
$S_{2n}$ that are constant on the double-cosets $B_n x B_n$, $x\in S_{2n}$.

Let $x_1,\dots, x_r$ be a full list of representatives for the $B_n$-double cosets in
$S_{2n}$, and let $\chi_i(n)$ ($i=1,\dots, n$) denote the corresponding characteristic 
function on $\overline{x}_i := B_n x_i B_n$.
Clearly, $\chi_1 (n),\dots, \chi_r (n)$ form a $\Z$-basis for $\mt{H}_n$, and therefore, 
for each $i$ and $j$ from $\{1,\dots, r\}$ there exist unique integers 
$b_{ij}^1(n),\dots, b_{ij}^r(n) \in \Z$ such that 
\begin{align}\label{A:product 1}
\chi_i (n) * \chi_j (n) = \sum_{k=1}^r b_{ij}^k(n) \chi_k (n).
\end{align}

Our purpose in this paper is to provide a missing argument 
from the proof of the fact [Theorem 4.2, \cite{AC12}] that, for all sufficiently large $n\in \Z$, 
and $k=1,\dots, r$, the structure constants defined by the eqn. (\ref{A:product 1}) are of the 
form 
$$
b_{ij}^k(n) = 2^{n-\alpha_k} n!  f_{ij}^k(n),
$$ 
where $\alpha_k \in \Z$ is a constant, and $f_{ij}^k(n) \in \Z[n]$ is a polynomial.
The exact argument that is used in~\cite{AC12} and the demonstration of its
failure is explained in more detail in the sequel, however, we give its synopsis here.

We view $\mt{H}_n$ as a subalgebra of the group ring $\Z[S_{2n}]$ by identifying 
$f\in \mt{H}_n$ with the sum 
$$
f \rightsquigarrow \sum_{x\in S_{2n}} f(x) x \in \Z[ S_{2n}].
$$
Accordingly, the convolution product of two functions $f,g\in \mt{H}_n$ 
translates to the ordinary product 
$$
f*g \rightsquigarrow \sum_{x,y\in S_{2n} } f(x) g(y) xy \in \Z[S_{2n}].
$$
As the coefficients of the elements of $\overline{x}_k$ in the expansion of 
$\chi_i(n) * \chi_j(n)$ are the same, it follows that $b_{ij}^k(n)$ is equal to number of couples 
$(a,b) \in \overline{x}_i \times \overline{x}_j$ such that $ab=x_k$.
For subsets $A,B$ and $C$ of $S_{2n}$ we denote $\{(a,b)\in A\times B:\ ab \in C\}$
by $V(A\times B; C)$, and we observe:
$$
b_{ij}^k(n) | \overline{x}_k| = | V(\overline{x}_i \times \overline{x}_j ; \overline{x}_k) | .
$$
In~\cite{AC12}, the cardinalities $| \overline{x}_k| $ and 
$|V(\overline{x}_i \times \overline{x}_j ; \overline{x}_k) |$ are calculated under the 
assumption that the sets $V(\overline{x}_i \times \overline{x}_j ; \overline{x}_k)$ grow
uniformly as $n$ gets bigger. It is brought to our attention by Omar Tout 
that this assumption is not true. 
In this paper, we amend this problem by replacing $\overline{x}_k$ with a suitable subset of it. 
At the same time,  this fixture discloses a subtle relationship between $B_\infty$-conjugacy 
classes in $S_{2n}$ and $B_n$-double cosets.

\section{Preliminaries}

In this section we introduce some new notation in addition to what have from~\cite{AC12}.

\subsection{Partitions and permutations}

Although we preserve the background from ~\cite{AC12}, we briefly recall the basic notation
for partitions. 
A partition is a finite, non-increasing sequence of integers.
The set of all partitions is denoted 
by $\mc{P}$. If a partition is obtained from another by adding or removing a finite number of 
zeros to or from the tail, we call these two partitions equivalent. Clearly, this defines an 
equivalence relation and we identify the elements of $\mc{P}$ with each other according to 
this relation.
Let $\lambda = (\lambda_1,\dots, \lambda_n)$ be a partition. Then 
\begin{enumerate}
\item Any non-zero entry $\lambda_i$ is called a {\em part} of $\lambda$. 
The multi-set of parts of $\lambda$ is denoted by $p(\lambda)$. 
\item The integer $| p(\lambda) |$ is called the {\em length} of $\lambda$ and 
denoted by $l(\lambda)$. 
\item The integer $\sum_{\lambda_i \in p(\lambda)} \lambda_i$ is called the 
{\em size} of $\lambda$ and denoted by $|\lambda|$.
\item The {\em weight} $w(\lambda)$ is defined to be the integer $l(\lambda) + |\lambda|$. 
\end{enumerate}
Let $\lambda,\mu \in \mc{P}$ be two partitions. If $\mu$ is a subsequence of $\lambda$,
then we write $\mu \subseteq \lambda$. 
In this case, the partition obtained from $\lambda$ by removing the elements of $\mu$ 
is denoted by $\lambda - \mu$. 
$\lambda + \mu$ is the partition obtained by vector addition of the original partitions. 
The unique partition that is obtained by reordering into a sequence  
of the union of multi-sets $p(\lambda)$ and $p(\mu)$ is denoted by 
$\lambda \cup \mu$. 
The exponential notation for a partition $\lambda = (\lambda_1,\dots, \lambda_r)$ 
is $\lambda = (1^{m_1(\lambda)},2^{m_2(\lambda)},\dots)$, 
where $m_i(\lambda)$ is the number occurrence of $i$ in $p(\lambda)$.
Given a positive integer $n > w(\lambda)$, the 
{\em $n$-completion} $\lambda(n)$ of $\lambda$ is the partition 
$\lambda \cup (1^{n-|\lambda|})$. Observe that $l(\lambda(n))= l(\lambda)+n-|\lambda|$.

For permutations, when it is needed we write $(i_1 \rightarrow i_2 \rightarrow \cdots \rightarrow i_r)$ 
in place of a cycle $(i_1\, \dots \, i_r)$.

\subsection{Infinite symmetric groups}

As usual, the notation $S_\infty, S_{2\infty}$, and $B_\infty$ stand for the direct limits of 
the systems $S_n\hookrightarrow S_{n+1}$, $S_{2n}\hookrightarrow S_{2n+2}$, and 
$B_n\hookrightarrow B_{n+1}$, $n=1,2,\dots$, which are directed by the obvious embeddings. 
In particular, an element $x\in S_\infty$ is a set automorphism on $\N$,
the set of non-negative integers, such that $x(i) \neq i$ for only finitely many $i\in \N$.

We use the notation $\mb{X}$ to denote the set of all 2-element subsets of $\N$.
We call an element $D_i\in \mb{X}$ a {\em couple}, if it is of the form $D_i := \{ 2i-1,2i \} \in \mb{X}$
for some $i\in \N$. 
The integers contained in the same couple are said to be {\em partners} to each other, 
and the partner of a number $k\in D_i$ is denoted by $t(k)$. Obviously, $t(k) = t_n(k)$ for 
all $n\geq k/2$, where $t_n$ is as in (\ref{A:tn}). For a subset $S$ of $\mathbb{N}$, the set $t(S)$ is defined in 
the expected way.
Define 
\begin{enumerate}
\item $\mb{X}(n) := \{ \{ i,j \} \in \mb{X}:\ i,j \leq 2n \}$,
\item $\mb{D}(n):= \{ D_i:\ i=1,2,\dots, n\}$,
\item $\mb{D}:= \{ D_i:\ i=1,2,\dots \}$.
\end{enumerate}
The infinite symmetric group $S_\infty$ (as well as $S_{2\infty}$) acts on $\mb{X}$ by 
$$
x\cdot \{ i,j\} = \{ x(i),x(j)\},\qquad x\in S_\infty,\ \{i,j\} \in \mb{X}.
$$
\begin{Remark}\label{B acts on D}
Observe that $x\in S_{2\infty}$ lies in $B_\infty$ if and only if it stabilizes the subset 
$\mb{D} \subseteq \mb{X}$. Simiarly, 
$x\in S_{2n}$ lies in $B_n$ if and only if it stabilizes the subset 
$\mb{D}(n) \subseteq \mb{X}(n)$.
\end{Remark}

\subsection{Support}

The {\em support} of two elements $x,y\in S_\infty$ is defined to be 
$S(x,y)=S(x) \cup S(y)$, where $S(x) = \{ i\in \N :\ x(i)\neq i\}$. Recall 
\begin{Lemma}
For $x,y\in S_\infty$, there exists $a\in S_\infty$ such that $(axa^{-1}, aya^{-1}) \in S_n \times S_n$
if and only if $| S(x,y) | \leq n$. 
\end{Lemma}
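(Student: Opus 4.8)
The plan is to reduce everything to one observation about how supports transform under conjugation. First I would record the identity $S(axa^{-1}) = a(S(x))$, valid for any $a,x\in S_\infty$: indeed $(axa^{-1})(i)\neq i$ holds precisely when $x(a^{-1}(i))\neq a^{-1}(i)$, i.e. when $a^{-1}(i)\in S(x)$, which says $i\in a(S(x))$. Applying this to both $x$ and $y$ and using $S(u,v)=S(u)\cup S(v)$ gives $S(axa^{-1})\cup S(aya^{-1}) = a(S(x,y))$. Since $S_n$ is exactly the set of elements of $S_\infty$ whose support is contained in $\{1,\dots,n\}$, the condition $(axa^{-1},aya^{-1})\in S_n\times S_n$ is equivalent to the single containment $a(S(x,y))\subseteq\{1,\dots,n\}$. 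This equivalence is the engine of the whole proof.

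For the forward direction, suppose such an $a$ exists. Then $a$ restricts to an injection of $S(x,y)$ into $\{1,\dots,n\}$, so $|S(x,y)| = |a(S(x,y))| \leq n$, as claimed.

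For the converse I would assume $m:=|S(x,y)|\leq n$ and construct $a$ explicitly so that it carries $S(x,y)$ onto the block $\{1,\dots,m\}\subseteq\{1,\dots,n\}$. Writing $S:=S(x,y)$ and $T:=\{1,\dots,m\}$, we have $|S|=|T|=m$, so the finite sets $S\setminus T$ and $T\setminus S$ have the same cardinality. Choosing any bijection between them and letting $a$ be the involution that swaps matched pairs while fixing every other integer, we obtain a permutation of $\N$ with finite support, hence an element of $S_\infty$, for which $a(S)=(S\cap T)\cup(T\setminus S)=T\subseteq\{1,\dots,n\}$. By the reduction of the first paragraph, this $a$ conjugates both $x$ and $y$ into $S_n$, completing the proof.

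The argument is essentially bookkeeping, so I do not anticipate a serious obstacle; the only point demanding care is to guarantee that the conjugator genuinely lies in $S_\infty$, that is, moves only finitely many integers. This is precisely why I would build $a$ as a finitely supported involution matching $S\setminus T$ with $T\setminus S$, rather than as an arbitrary rearrangement of $\N$.
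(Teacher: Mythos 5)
Your proof is correct. Note, however, that the paper does not actually prove this lemma at all: its entire ``proof'' is a citation to Farahat--Higman \cite{FH}, so there is no in-paper argument to compare against. What you have written is the standard self-contained argument that the citation conceals, and it is cleanly executed: the identity $S(axa^{-1})=a(S(x))$ reduces the two-coordinate condition $(axa^{-1},aya^{-1})\in S_n\times S_n$ to the single containment $a(S(x,y))\subseteq\{1,\dots,n\}$, the forward direction is then immediate from injectivity of $a$, and for the converse your finitely supported involution matching $S\setminus T$ with $T\setminus S$ is exactly the right device --- it is the point where one must be careful, since an arbitrary bijection of $\N$ carrying $S$ into $\{1,\dots,n\}$ need not lie in $S_\infty$, and you address this explicitly. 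The only cosmetic caveat is the paper's (slightly unusual) convention that $S_\infty$ consists of finitely supported automorphisms of $\N$ including $0$; under the paper's directed system $S_n\hookrightarrow S_{n+1}$ your identification of $S_n$ with the permutations supported in $\{1,\dots,n\}$ is the intended one, so nothing breaks. In short: your proposal supplies a complete elementary proof where the paper merely defers to a reference.
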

\begin{proof}
See~\cite{FH}.
\end{proof}
Next, we introduce some useful variants of the notion of support. 
\begin{Definition}\label{D:ordinary vs unpaired}
The {\em $\mathbb{D}$-support} $D(x)$ of an element $x\in S_{2n} \subseteq S_{2\infty}$ 
is 
\begin{equation*}
D(x):=\{D_i\in\mathbb{D}:x(D_i)\notin \mathbb{D}\},
\end{equation*}
and the {\em unpaired $\mathbb{D}$-support} $DS(x)$ of $x$ is 
\begin{equation*}
DS(x):=\bigcup_{D_i\in D(x)}D_i.
\end{equation*}
\end{Definition}
Paraphrasing Definition~\ref{D:ordinary vs unpaired}; the $\mathbb{D}$-support of $x$ is the set 
of all couples that are mapped to non-couples, and the unpaired $\mathbb{D}$-support of $x$ is the 
set of all partners that are mapped to non-partners.

 \begin{Example}
Let $x=(12)$, $y=(132)$ and $z=(13245)$ be three permutations that are written in cycle notation. 
Then we have 
\begin{eqnarray*}
\emptyset=DS(x) & \subsetneq & S(x)=\{1,2\}\,\\
\{1,2,3,4\}=DS(y) & \supsetneq & S(y)=\{1,2,3\},\\
\{3,4,5,6\}=DS(z) & \neq & S(z)=\{1,2,3,4,5\},
\end{eqnarray*}
which shows that there is no uniform containment relation between the unpaired $\mathbb{D}$-support 
and the (ordinary) support.
\end{Example}

Some of relations between different types of support is revealed by our next result. 
\begin{Lemma}\label{properties of the modified support}  
Let $x\in S_{2n}$. Then
\begin{enumerate}
\item $2| D(x)|=|DS(x)|$;
\item For any $y\in B_nxB_n$ we have $2| D(x)|=|DS(x)|=|DS(y)|=2|D(y)|$;
\item There exist $y\in B_nxB_n$ such that 
$D(x)=D(y)$ and $S(y)=DS(y)$;
\end{enumerate} 
\end{Lemma}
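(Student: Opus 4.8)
The plan is to treat the three claims in order, since the later parts build on the earlier ones.

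For part (1), I would argue purely from the definitions. By definition $DS(x) = \bigcup_{D_i \in D(x)} D_i$ is a union of couples $D_i$, each of which has exactly two elements. The key observation is that this union is disjoint: distinct couples $D_i, D_j \in \mathbb{D}$ share no integer, because the couples $\{2i-1,2i\}$ partition $\mathbb{N}$. Hence the cardinality is additive over the union, giving $|DS(x)| = \sum_{D_i \in D(x)} |D_i| = 2\,|D(x)|$. This is a routine counting step and I expect no obstacle here.

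For part (2), the goal is to show that the quantities $|D(x)|$ and $|DS(x)|$ are constant on the double coset $B_n x B_n$. In view of part (1) it suffices to prove $|D(y)| = |D(x)|$ for every $y = b x b'$ with $b, b' \in B_n$. The natural approach is to show that left and right multiplication by elements of $B_n$ each preserve $|D(\cdot)|$. By Remark~\ref{B acts on D}, an element $b \in B_n$ stabilizes $\mathbb{D}(n)$ under the action on two-element subsets, so $b$ permutes the couples among themselves, and likewise sends non-couples to non-couples. I would then trace through the definition $D(x) = \{D_i \in \mathbb{D} : x(D_i) \notin \mathbb{D}\}$ to see how $D$ transforms under the two multiplications. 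For right multiplication, $D_i \in D(xb)$ iff $x(b(D_i)) \notin \mathbb{D}$; since $b$ permutes $\mathbb{D}$, the map $D_i \mapsto b(D_i)$ is a bijection of $\mathbb{D}$ carrying $D(xb)$ onto $D(x)$, whence $|D(xb)| = |D(x)|$. For left multiplication, $D_i \in D(bx)$ iff $b(x(D_i)) \notin \mathbb{D}$, which (again because $b$ stabilizes $\mathbb{D}$) is equivalent to $x(D_i) \notin \mathbb{D}$, so in fact $D(bx) = D(x)$ exactly. Combining the two gives the invariance of $|D(\cdot)|$ across the whole double coset, and part~(1) then upgrades this to the full chain of equalities. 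I expect this to be the main conceptual step, and the delicate point is keeping straight the difference between left multiplication (which preserves $D$ on the nose) and right multiplication (which only preserves its cardinality via a relabeling of couples).

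For part (3), I want to produce a specific representative $y \in B_n x B_n$ with $D(y) = D(x)$ and $S(y) = DS(y)$. The idea is to correct $x$ on the couples it maps to couples, using the freedom in the double coset. Concretely, I would modify $x$ by right-multiplying by a suitable $b \in B_n$ that acts only inside the couples $D_i \notin D(x)$ (i.e.\ a product of the partner-swaps $t$ restricted to those couples) so as to kill any residual displacement that $x$ causes within couples it preserves, while leaving $D(x)$ unchanged by the argument in part~(2). The target condition $S(y) = DS(y)$ says precisely that every integer moved by $y$ lies in a couple that $y$ maps off $\mathbb{D}$, equivalently that $y$ fixes every couple it sends to a couple pointwise. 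I would verify that the chosen $b$ achieves exactly this: on couples in $D(x)$ nothing needs adjusting (they already contribute to both $S$ and $DS$), and on couples outside $D(x)$ the multiplication arranges that $y$ either fixes both partners or the couple already lay outside $S(x)$. The main obstacle I anticipate is checking that such a $b$ can be chosen to sit in $B_n$ and to act \emph{only} on the couples outside $D(x)$ without disturbing $D(x)$ itself; this requires the observation from part~(2) that right multiplication by couple-internal swaps permutes $\mathbb{D}$ trivially, hence fixes $D(x)$, so the construction is consistent.
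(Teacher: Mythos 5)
Your parts (1) and (2) are correct and coincide with the paper's argument (the paper disposes of (2) by citing Remark~\ref{B acts on D}; your left/right multiplication analysis is exactly the content of that remark). Part (3), however, has two genuine gaps. The first is your choice of correcting element: you propose to right-multiply by a product of partner-swaps supported on the couples outside $D(x)$, but such elements cannot remove the displacement of a couple $D_j\notin D(x)$ that $x$ maps to a \emph{different} couple $D_i\neq D_j$. The elements of such a $D_j$ lie in $S(x)\setminus DS(x)$ and must be removed from the support, yet for any product $b$ of transpositions of the form $(2k-1\rightarrow 2k)$ one has $b(D_j)=D_j$ as a set, hence $xb(D_j)=x(D_j)=D_i$, and both elements of $D_j$ remain displaced. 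For instance, if $x=(1\rightarrow 3)(2\rightarrow 4)$ then $D(x)=\emptyset$ and $S(x)=\{1,2,3,4\}$, and $S(xb)\neq\emptyset=DS(xb)$ for every product $b$ of partner-swaps, although $y=\mathrm{id}\in B_nxB_n$ is the required representative. This is exactly why the paper multiplies on the \emph{left} by $b_j=\big(2j-1\rightarrow x(2j-1)\big)\big(2j\rightarrow x(2j)\big)$: this permutation lies in $B_n$ precisely because $\{x(2j-1),x(2j)\}$ is a couple (it swaps the couples $D_j$ and $D_i$), and $b_jx$ fixes $D_j$ pointwise.

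The second gap is that you have quietly weakened the equality $S(y)=DS(y)$ to the inclusion $S(y)\subseteq DS(y)$: your gloss ``every integer moved by $y$ lies in a couple that $y$ maps off $\mathbb{D}$'' is only one direction, and your remark that couples in $D(x)$ ``need no adjusting'' is false. A couple in $D(y)$ contributes both of its elements to $DS(y)$ but need not contribute both to $S(y)$: for $y=(1\rightarrow 3)$ one has $D(y)=\{D_1,D_2\}$ and $DS(y)=\{1,2,3,4\}$, while $S(y)=\{1,3\}$, so $S(y)\subseteq DS(y)$ holds and yet $S(y)\neq DS(y)$. Repairing this requires the second stage of the paper's proof, which your proposal omits entirely: for each $j\in DS(y)\setminus S(y)$, multiply on the right by $(j\rightarrow t(j))$. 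This preserves $D(y)$ (a partner-swap fixes every couple setwise), and since $y(j)=j$ and $y$ is injective, $yb(j)=y(t(j))\neq j$, so $j$ and $t(j)$ are inserted into the support without disturbing anything outside $\{j,t(j)\}$. As written, your construction neither addresses this inclusion nor provides a mechanism that could achieve it, so it does not prove part (3).
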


\begin{proof}
The first assertion is obvious, and the second follows from Remark \ref{B acts on D}. 
To prove the third part of the lemma, we start with a claim: 

\smallskip
{\em If for some $j\in \N$, 
$D_j\cap S(x)\neq\emptyset$ and $D_j\notin D(x)$, then there exists $b=b_j\in B_n$ such that 
$D_j\cap S(bx)=\emptyset$ and $D(x)=D(bx)$.}

\begin{proof}[Proof of the claim]
First assume that $x(2j-1)\neq 2j$.
Since $D_j\notin D(x)$, the set $\{x(2j-1),x(2j)\}$ is equal to some $D_i\neq D_j$. 
We then set 
\begin{equation}\label{shrinking support}
b=\big(2j-1 \rightarrow x(2j-1)\big)\big(2j\rightarrow  x(2j)\big).
\end{equation} 
It is straightforward to verify that 
\begin{enumerate}
\item $b\in B_n$, and 
\item $bx (D_j) = D_j$, hence $D_j \notin D(bx)$, and $D_j \cap S(bx) = \emptyset$. 
\end{enumerate}
Next we assume that $x(2j-1)=2j$. Since $D_j \notin D(x)$, we know that 
there exists $D_i \in \mb{D}$ such that $x(D_j)= D_i$. Since the partner of $2j-1$ is $2j$, 
we see that $i=j$ and that $x(2j)=2j-1$. In this case, we set $b=(2j-1 \rightarrow 2j) \in B_n$. 
It is straightforward to verify that $D_j \notin D(bx)$, and $D_j \cap S(bx) = \emptyset$. 
This finishes the proof of our claim. 

\end{proof}

Now, by applying $b=b_j$ to $x$ for each $j$ as in our claim, 
we arrive at an element $y \in B_n x$ such that $D(x) = D(y)$, and 
if $D_j\notin D(y)$ for some $j\in \N$, then $D_j\cap S(y)=\emptyset$.
Equivalently, there exists $y\in B_nx$ such that $D(x) = D(y)$ and 
$S(y) \subseteq DS(y) =\bigcup_{D_i\in D(y)}D_i$.

It remains to show that if $j\in  DS(y)\textbackslash S(y)$, then there exist $b\in B_n$ such that 
$j\in S(yb)\cap DS(yb)$ and $D(yb)=D(y)$. Indeed, for $b=(j\rightarrow  t(j))$, we have $D(yb) =D(y)$.
Moreover, since $yb(j) = y(t(j))\neq j$ (as $y(j)=j$), $j$ is a member of $S(yb)$. 
On the other hand, it is straightforward to check that the couple $\{ j,t(j)\}$ is an element of $D(yb)$. 
In particular we see that $j\in DS(yb)$. Hence $j \in DS(yb)\cap S(yb)$.
The proof is complete. 
\end{proof}

\begin{Definition}
For a pair of elements $x,y\in S_{2\infty}$, the {\em $\mathbb{D}$-support}
and the {\em unpaired $\mathbb{D}$-support} are defined, respectively, by 
\begin{eqnarray*}
D(x,y) & = & D(x)\cup D(y),\\
DS(x,y)& = & DS(x)\cup DS(y). 
\end{eqnarray*}
The {\em completed support} $CS(x,y)$ of $(x,y)$ is defined to be 
\begin{equation*}
CS(x,y)= S(xy)\cup t(S(xy))\cup DS(x,y).
\end{equation*}
\end{Definition}

\begin{Lemma}\label{properties of CS}
Let $(x,y)\in S_{2\infty} \times S_{2\infty}$. 
Then $t(CS(x,y))=CS(x,y)$ and for an integer $i\notin CS(x,y)$ the following hold:
\begin{enumerate}
\item If $y(i)=j$ if and only if $x(j)=i$;
\item $i\in S(x)$ if and only if $i\in S(y)$;
\item If $y(i)=j$ then $y(t(i))=t(j)$ and $x(t(j))=t(x(j))$.
\end{enumerate}
\end{Lemma}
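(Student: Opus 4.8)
The plan is to first reduce everything to a small number of concrete consequences of the hypothesis $i\notin CS(x,y)$, and then read off the four assertions, treating the $t$-invariance of $CS(x,y)$ as the single tool that makes the last (and only delicate) assertion go through.

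First I would record the $t$-invariance $t(CS(x,y))=CS(x,y)$. Unwinding $CS(x,y)=S(xy)\cup t(S(xy))\cup DS(x,y)$, the set $DS(x,y)=DS(x)\cup DS(y)$ is by construction a union of full couples $D_k$, hence closed under $t$; and $S(xy)\cup t(S(xy))$ is closed under $t$ because $t$ is an involution, so $t$ carries it to $t(S(xy))\cup S(xy)$. Taking the union gives the $t$-invariance. Next I would translate $i\notin CS(x,y)$ into working hypotheses. Since $i\notin S(xy)$ and $i\notin t(S(xy))$, we get $xy(i)=i$ and $xy(t(i))=t(i)$; and since $i\notin DS(x)\cup DS(y)$, the couple $\{i,t(i)\}$ lies in neither $D(x)$ nor $D(y)$, so both $x$ and $y$ send it to a couple, i.e. $x(t(i))=t(x(i))$ and $y(t(i))=t(y(i))$. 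By the $t$-invariance just established, $t(i)\notin CS(x,y)$ as well, so this whole batch of identities also holds with $i$ replaced by $t(i)$.

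With these in hand the first two assertions are immediate. The relation $xy(i)=i$ says exactly $y(i)=x^{-1}(i)$, so $y(i)=j$ if and only if $i=x(j)$, which is (1). For (2), if $y(i)=i$ then $x(i)=xy(i)=i$, while if $x(i)=i$ then $x(y(i))=i=x(i)$ forces $y(i)=i$ by injectivity; hence $i\in S(x)$ if and only if $i\in S(y)$. For (3), assume $y(i)=j$. The first identity follows from $y$ respecting the couple of $i$: $y(t(i))=t(y(i))=t(j)$. The second identity $x(t(j))=t(x(j))$ is the one real point, and the natural-looking attempt to prove it by showing $j\notin DS(x)$ is not directly available; instead I would exploit $t(i)\notin CS(x,y)$. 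Applying $xy(t(i))=t(i)$ and substituting $y(t(i))=t(j)$ yields $x(t(j))=t(i)$, and since $x(j)=i$ by (1) this reads $x(t(j))=t(x(j))$, as required.

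The main obstacle is precisely this last identity in (3): the hypothesis $i\notin CS(x,y)$ controls the couple of $i$ for $x$ and for $y$ separately, but says nothing a priori about the couple of $j=y(i)$ under $x$. The device that resolves it is the $t$-invariance of $CS(x,y)$, which promotes all of the fixing and couple-preserving identities from $i$ to $t(i)$ and lets us transport the statement through the fixed point $xy(t(i))=t(i)$ rather than arguing about $j$ directly.
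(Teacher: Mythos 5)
Your proposal is correct and follows essentially the same route as the paper's proof: derive $t$-invariance of $CS(x,y)$ from its definition, reduce everything to $xy(i)=i$ (and, via $t$-invariance, $xy(t(i))=t(i)$), and settle the delicate second identity of (3) by applying the part-(1) relation at $t(i)$ rather than trying to control the couple of $j$ under $x$ directly. The only differences are cosmetic (your part (2) argues by contraposition where the paper argues directly), so there is nothing to add.
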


\begin{proof}
By definition of the completed support it is clear that $t(CS(x,y))=CS(x,y)$. 
Observe that $xy(i)=i$ for $i\notin CS(x,y)$.
\begin{enumerate}
\item Let $j$ be such that $y(i)=j$. Then $i=xy(i)=x(j)$, which proves the first assertion. 
\item Suppose $i\in S(y)$. Then by Part 1., if $y(i)=j$, then $x(j)=i$ which shows that $i\in S(x)$. 
Conversely, suppose $i\in S(x)$. Then there exist $j\neq i$ such that $x(j)=i=xy(i)$, which means $y(i)=j$.
\item It follows from the fact that $CS$ is closed under the action of $t$, $\{i,t(i)\} \cap CS(x,y) = \emptyset$.
In particular $i,t(i)\notin DS(y)$. This means that $\{ y(i),y(t(i))\}$ is couple, which proves the claim. 
As for the second part, we know that $t(i) \notin CS(x,y)$, and that $y(t(i))=t(j)$. Thus, by Part 1, $x(t(j))=t(i)$. 
\end{enumerate} 
\end{proof}

\subsection{Indexing the conjugacy classes}

Suppose that the cycle decomposition (with singletons included) of a permutation $x\in S_{n}$ is given by 
\begin{equation}\label{eq cycle decomposition}
x=c_1\cdots c_k.
\end{equation}
Customarily, the partition $\lambda_{(x)}$ defined by the positive integers  
$|S(c_1)|,\cdots,|S(c_k)|$ is called the cycle type of $x$. 
The {\em stable cycle type} $\lambda_x$ of $x$ is defined by setting:
\begin{equation}
\lambda_x=\lambda_{(x)}-(1^{l(\lambda_{(x)})}).
\end{equation}
The stable cycle type $\lambda_x$ of an element $x\in S_{\infty}$ is well defined and it 
determines the conjugacy class of $x$ in $S_{\infty}$ completely. In other words, 
$y\in S_\infty$ is conjugate to $x$ if and only if $\lambda_y = \lambda_x$.
Let $C_\lambda$ denote the corresponding conjugacy class in $S_{\infty}$,
that is $C_\lambda= \{ x\in S_\infty:\ \lambda_x=\lambda\}$. 
The intersection $C_{\lambda}\cap S_n$ is denoted by 
$C_{\lambda}(n)$ and we call it as the {\em $n$-part of $C_{\lambda}$}. 
By definition, $C_\lambda(n)$ is the set of all permutations $x$ in $S_n$ with $\lambda_{(x)}=\lambda(n)$,
the $n$-completion of $\lambda$. 
Obviously, $C_{\lambda}(n)$ is non-empty if and only if $w(\lambda)\leq n$,
and moreover, $C_\lambda(n)$ is a full conjugacy class in $S_n$. 
The proofs of these assertions are well known (see [FH]).

\subsection{Indexing the $B_n$-double cosets}

Each element $x$ of $S_{2n}$ has an associated undirected graph $\Gamma_x$ and 
two permutations lie in the same $B_n$-double coset if and only if their associated graphs
 are isomorphic. Let us briefly explain this.
Let $[2n]$ denote the set $\{1,2,\dots,2n\}$. 
Then the vertex set of the graph $\Gamma_x$ of $x$ is
$V_x=\{v_i:i\in[2n]\}$, where $v_i:=(i,x(i))$.
The edge set $E_x$ of the graph is a union of two disjoint sets, denoted respectively by $R_x$ and $B_x$. 
The elements $r_i$, $i=1,\dots, n$ of $R_x$ are called the {\em straight edges}, and they are 
defined/denoted as in 
\begin{equation*}
R_x :=\{ r_i =(v_{2i-1}:v_{2i})|\ i=1,\cdots,n\}.
\end{equation*}
The elements of $B_x$ are defined by $b_i:=(v_{x^{-1}(2i-1)}:v_{x^{-1}(2i)})$, 
$i=1,\dots, n$, and they are called the {\em curved edges}. 
\begin{Example}\label{E:1}
For $x=(1234)(5768)(9\rightarrow 10)$ the associated graph $\Gamma_x$ is depicted 
in Figure~\ref{F:1}.
\begin{figure}[htp]
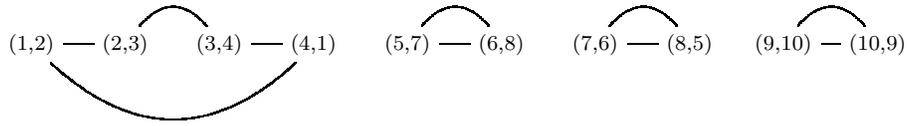

\[
\newxyColor{pink}{1.0 0.4 0.5}{rgb}{}  
\xygraph{
!{<0cm,0cm>;<1.25cm,0cm>:<0cm,1cm>::}
!~-{@{-}@[|(2.5)]@[pink]}
!{(1,0) }*+{_{(1,2)}}="1"
!{(2,0) }*+{_{(2,3)}}="2"
!{(3,0) }*+{_{(3,4)}}="3"
!{(4,0) }*+{_{(4,1)}}="4"
!{(5,0) }*+{_{(5,7)}}="5"
!{(6,0) }*+{_{(6,8)}}="6"
!{(7,0) }*+{_{(7,6)}}="7"
!{(8,0) }*+{_{(8,5)}}="8"
!{(9,0) }*+{_{(9,10)}}="9"
!{(10,0) }*+{_{(10,9)}}="10"
"1"-"2"
"3"-"4"
"5"-"6"
"7"-"8"
"9"-"10"
"1"-@/_1cm/"4"
"2"-@/^0.5cm/"3"
"5"-@/^0.5cm/"6"
"7"-@/^0.5cm/"8"
"9"-@/^0.5cm/"10"
}
\]
\label{F:1}
\caption{The graph of $x=(1234)(5768)(9\rightarrow 10)$.}
\end{figure}
\end{Example}

In general, each vertex on the graph $\Gamma_x$ lies on exactly one straight and one curved edge,
therefore, the connected components of $\Gamma_x$ are of even size. 
If the lengths of the connected components of $\Gamma_x$ are listed as 
$2 s_1\geq 2 s_2\geq\cdots\geq 2  s_k$, 
then $\mu_{(x)}:=(s_1,\dots,s_k)$ is a partition of $n$. In Example~\ref{E:1}, 
the partition is given by $\mu_{(x)}=(2,1,1,1)$ and 
$l(\mu_{(x)})=4$. 
The partition $\mu_{(x)}$ is called the {\em coset type} of $x$, which is justified by the following 
well-known result:
\begin{Lemma}[\cite{Mac}]\label{indexing double cosets by partitions}
Let $x,y\in S_{2n}$. Then
\begin{enumerate}
\item $\mu_{(x)}=\mu_{(y)}$ if and only if 
there is a graph isomorphism between $\Gamma_x$ and $\Gamma_y$;
\item $B_nxB_n=B_nyB_n$ if and only if 
there is a graph isomorphism between $\Gamma_x$ and $\Gamma_y$.
\end{enumerate}
\end{Lemma}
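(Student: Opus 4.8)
The plan is to translate the two-color graph $\Gamma_x$ into a pair of perfect matchings on $[2n]$ and to read both statements off from the way $B_n$ acts on such matchings. Identifying each vertex $v_i=(i,x(i))$ with its first coordinate $i$, the straight edges become precisely the set of couples $M:=\mathbb{D}(n)=\{D_1,\dots,D_n\}$, while the curved edges become the matching $x^{-1}(M)=\{\{x^{-1}(2i-1),x^{-1}(2i)\}:1\le i\le n\}$. Thus $\Gamma_x$ is the graph on $[2n]$ with red matching $M$ and blue matching $x^{-1}(M)$, and by Remark~\ref{B acts on D} the membership $b\in B_n$ is equivalent to $b(M)=M$; this dictionary is the real content of the argument, and everything below is bookkeeping against it.

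For Part~1, I would observe that each vertex meets exactly one red and one blue edge, so every connected component of $\Gamma_x$ is a cycle whose edges alternate in color, necessarily of even length $2s_j$. A color-preserving isomorphism carries components to components of equal length, and conversely any two alternating cycles of the same length are isomorphic; hence the isomorphism class of $\Gamma_x$ is completely determined by the multiset $\{s_1,\dots,s_k\}$, that is, by $\mu_{(x)}$. This yields $\Gamma_x\cong\Gamma_y$ if and only if $\mu_{(x)}=\mu_{(y)}$.

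For the forward direction of Part~2 I would treat the two sides separately. The blue matching of $\Gamma_{bx}$ is $(bx)^{-1}(M)=x^{-1}b^{-1}(M)=x^{-1}(M)$, so left multiplication by $b\in B_n$ leaves $\Gamma_x$ literally unchanged. For the right side, the blue matching of $\Gamma_{xb'}$ is $b'^{-1}x^{-1}(M)$, and the permutation $\phi=b'^{-1}\in B_n$ satisfies $\phi(M)=M$ and $\phi(x^{-1}(M))=b'^{-1}x^{-1}(M)$, hence is a color-preserving isomorphism $\Gamma_x\to\Gamma_{xb'}$. Composing, any $y=bxb'\in B_nxB_n$ gives $\Gamma_y\cong\Gamma_x$.

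For the converse, given a color-preserving isomorphism $\phi\colon\Gamma_x\to\Gamma_y$, I would view $\phi$ as a permutation of $[2n]$ through the first-coordinate identification. Preservation of the red edges forces $\phi(M)=M$, so $\phi\in B_n$, while preservation of the blue edges gives $\phi(x^{-1}(M))=y^{-1}(M)$. Applying $y$ to the latter yields $y\phi x^{-1}(M)=M$, so $b:=y\phi x^{-1}\in B_n$, and then $y=b\,x\,\phi^{-1}$ with $\phi^{-1}\in B_n$, which places $y$ in $B_nxB_n$. The step I expect to require the most care is exactly this last one: realizing the abstract graph isomorphism as a genuine element of $S_{2n}$ and arranging the inverses so that the resulting factorization lands in the correct double coset. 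Once the identification $B_n=\{\,b:b(M)=M\,\}$ is in hand, however, it reduces to the displayed one-line manipulation.
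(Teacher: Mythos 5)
Your proof is correct, and it is worth noting at the outset that the paper itself offers no proof of this lemma: it is stated as a known result with a citation to Macdonald's book, so there is no internal argument to compare against. What you have written is a complete, self-contained proof, and it is essentially the standard one underlying Macdonald's treatment: identify $\Gamma_x$ with the pair of perfect matchings $\bigl(M, x^{-1}(M)\bigr)$ on $[2n]$, where $M=\mathbb{D}(n)$, and use the characterization of Remark~\ref{B acts on D} that $b\in B_n$ if and only if $b(M)=M$. Your dictionary is set up correctly (the curved edges of $\Gamma_x$ are exactly $x^{-1}(M)$ under the first-coordinate identification of vertices), the forward direction of Part~2 correctly splits into left multiplication (which fixes $\Gamma_x$ outright, since $(bx)^{-1}(M)=x^{-1}(M)$) and right multiplication (which acts by the color-preserving bijection $b'^{-1}$), and the converse computation $y\phi x^{-1}(M)=M$, hence $y=\bigl(y\phi x^{-1}\bigr)\,x\,\phi^{-1}\in B_nxB_n$, is exactly the right one-line manipulation.

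One cosmetic point deserves a sentence in a final write-up: the lemma as stated speaks of a ``graph isomorphism,'' while your Part~2 converse begins from a \emph{color-preserving} isomorphism. This costs nothing, because the components of $\Gamma_x$ and $\Gamma_y$ are cycles, so any graph isomorphism preserves the multiset of component lengths; by your Part~1 argument equal length multisets already yield a color-preserving isomorphism, which you may then feed into the converse. Making that reduction explicit closes the only visible seam in the argument.
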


As in the case of conjugacy classes, the partition $\mu_{(x)}$ is dependent on $n$. 
Along the similar lines, to characterize the $B_{\infty}$-double coset of an element $x\in S_{\infty}$,
we have to ``stabilize.'' This is done by introducing the {\em stable coset type} $\mu_x$ of $x$:
$\mu_x=\mu_{(x)}-(1^{l(\mu_{(x)})})$.
The $B_{\infty}$-double cosets of two restricted permutations $x,y$ in $S_{2\infty}$ are same 
if and only if $\mu_x=\mu_y$.
The $B_\infty$-double coset of a stable partition is denoted by $K_\mu$,
and it consists of permutations $x\in S_{\infty}$ with $\mu_x=\mu$. 
The intersection $K_{\mu}\cap S_{2n}$ is denoted by $K_{\mu}(n)$ and it is called the {\em $n$-part} 
of $K_{\mu}$. By definition, $K_\mu=\{ x\in S_{2n}|\ \mu_{(x)}=\mu(n)\}$.
Clearly, $K_\mu$ is a full $B_n$-double coset in $S_{2n}$. It is also clear that 
the $K_{\mu}(n)$ is non-empty if and only if 
$w(\mu)\leq n$.  The proofs of these assertions are simple and recorded in~\cite{AC12}.

\vspace{.5cm}

The cardinality of a $D$-support stays constant on a $B_{\infty}$ double coset.
In fact, more is true:
\begin{Lemma}\label{magnitude of a coset}
For any $x\in K_{\mu}$ the equality $|DS(x)|=2|D(x)|=2w(\mu)$ holds.
\end{Lemma}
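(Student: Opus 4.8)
The plan is to reduce everything to the single identity $|D(x)| = w(\mu)$: the first equality $|DS(x)| = 2|D(x)|$ is exactly part (1) of Lemma~\ref{properties of the modified support}, so nothing new is needed there. Since neither $D(x)$ nor the stable coset type $\mu_x$ changes when the ambient group is enlarged, I would fix $n$ large enough that $x \in S_{2n}$ and $w(\mu) \leq n$, and carry out the whole computation inside the graph $\Gamma_x$ of Lemma~\ref{indexing double cosets by partitions}. The facts I would lean on are that the $n$ straight edges $r_i$ are indexed by the domain couples $D_i = \{2i-1, 2i\}$, that the $n$ curved edges $b_i$ are indexed by the image couples, and that each vertex lies on exactly one straight and one curved edge.

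The key step, and the one I expect to be the main obstacle, is a dictionary between the $\mathbb{D}$-support and the components of $\Gamma_x$: I claim that a couple $D_i$ lies \emph{outside} $D(x)$ if and only if $v_{2i-1}$ and $v_{2i}$ form a connected component of size two. For the forward direction, $D_i \notin D(x)$ means $\{x(2i-1), x(2i)\} = D_m$ for some $m$, whence $\{x^{-1}(2m-1), x^{-1}(2m)\} = \{2i-1, 2i\}$ and the curved edge $b_m$ joins the very pair $v_{2i-1}, v_{2i}$ already joined by $r_i$; since each vertex meets only one straight and one curved edge, these two vertices have no other neighbours and form a size-two component. Conversely, in such a component the curved edge through $v_{2i-1}$ is forced to terminate at $v_{2i}$, which makes $x(2i)$ the partner of $x(2i-1)$ and hence $x(D_i)$ a couple. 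Pinning down this coincidence of straight and curved edges over the good couples is the delicate part; everything afterwards is bookkeeping.

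Granting the dictionary, the couples in $D(x)$ are exactly the straight edges sitting inside components of size at least four, and in a component of size $2s$ with $s \geq 2$ all $s$ of its straight edges are bad. Writing the coset type as $\mu_{(x)} = (s_1, \dots, s_k)$, the parts equal to $1$ are the size-two components (the good couples), while the parts $s_i \geq 2$ are precisely the nonzero parts of $\mu = \mu_x$ raised by one, so $s_i = \mu_i + 1$. The count then closes as
\[
|D(x)| = \sum_{s_i \geq 2} s_i = \sum_{i=1}^{l(\mu)} (\mu_i + 1) = |\mu| + l(\mu) = w(\mu),
\]
which is the assertion.
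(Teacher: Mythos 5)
Your proposal is correct, but note that this is one of the statements for which the present paper gives no argument at all: its ``proof'' is the single line ``See~\cite{AC12}.'' So your write-up is not an alternative to an internal proof but a reconstruction of the one delegated to the original paper, and it is a sound one. The route you take is the natural one given how the coset type is defined: since $\mu_{(x)}$ is read off from the component sizes of $\Gamma_x$ (Lemma~\ref{indexing double cosets by partitions}), you prove the dictionary ``$D_i\notin D(x)$ if and only if $v_{2i-1},v_{2i}$ form a component of size two,'' and both directions hold --- the forward one because each vertex lies on exactly one straight and one curved edge, so a doubled edge isolates its two endpoints; the converse because a curved edge has distinct endpoints ($x^{-1}$ is injective), so inside a size-two component it must run between $v_{2i-1}$ and $v_{2i}$, forcing $x(2i)=t(x(2i-1))$ and hence $x(D_i)\in\mathbb{D}$. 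The bookkeeping then closes correctly: $D(x)$ consists of the couples whose straight edges lie in components of size $2s_i$ with $s_i\geq 2$, each such component contains exactly $s_i$ straight edges, and since passing from $\mu_{(x)}$ to $\mu=\mu_x$ deletes the parts equal to $1$ and lowers each remaining part by one, $|D(x)|=\sum_{s_i\geq 2}s_i=|\mu|+l(\mu)=w(\mu)$; together with part (1) of Lemma~\ref{properties of the modified support} this gives the lemma. Your opening reduction (fixing $n$ and noting that neither $D(x)$ nor $\mu_x$ changes under the embeddings $S_{2n}\hookrightarrow S_{2n+2}$) is also needed and correctly handled, since enlarging $n$ only adds size-two components to $\Gamma_x$. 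It is worth observing that your dictionary is the couple-by-couple analogue of the component arguments this paper does spell out elsewhere (Lemma~\ref{connected component} and Proposition~\ref{P:S=w}), so your proof fits squarely within the paper's toolkit while making self-contained a fact on which Corollary~\ref{C:main corollary} and Lemma~\ref{orbit cardinality} both rely.
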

\begin{proof}
See~\cite{AC12}.
\end{proof}

\begin{Example}
Let $\mu=(3,2,1)$. Then $w(\mu)=9$ and thus $K_{\mu}\cap S_{2n}$ is non-empty if and only if $n\geq 9$. 
The permutations
$$
x=(1357)(9\rightarrow 11\rightarrow 13)(15\rightarrow 17)
$$   and 
$$
y=(7\rightarrow  9 \rightarrow 13)(5\rightarrow 11 \rightarrow12 \rightarrow 1 \rightarrow 3)(2 \rightarrow 14)(15 \rightarrow17 \rightarrow 16)
$$
are from $K_{\mu}$, hence $B_nxB_n=B_nyB_n$ for all $n\geq 13$.
\end{Example}

Next, we have a critical lemma about the cycles of a permutation $x\in S_{2n}$ 
and that of $\Gamma_x$. 
\begin{Lemma}\label{connected component}
Let $x,x_1\in S_{2n}$ be two permutations such that $x=c_1x_1$, 
where $c$ is a cycle satisfying 
\begin{enumerate}
\item $S(c_1)\cap S(x_1)=\emptyset$;
\item $t(S(c_1))\cap S(x)=\emptyset$. 
\end{enumerate}
Then $DS(c_1)$ is equal to the set of vertices of a connected component of $\Gamma_x$. 
\end{Lemma}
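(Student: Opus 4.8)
The plan is to reduce the statement to a single orbit computation in the graph $\Gamma_x$, after first pinning down $DS(c_1)$ explicitly. First I would unpack the two hypotheses. Condition (1) says $x=c_1x_1$ is a product of permutations with disjoint support, so $S(x)=S(c_1)\cup S(x_1)$ with the two sets disjoint, and $x$ restricts to $c_1$ on $S(c_1)$; writing $c_1=(a_1\to a_2\to\cdots\to a_m)$ we have $x(a_k)=a_{k+1}$ with indices read modulo $m$. Condition (2) says $t(a)\notin S(x)$ for every $a\in S(c_1)$; in particular $t(a)\notin S(c_1)$, so $S(c_1)\cap t(S(c_1))=\emptyset$ and no couple $D_i\in\mathbb{D}$ can have both of its elements inside $S(c_1)$.

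Next I would compute $DS(c_1)$. By Definition~\ref{D:ordinary vs unpaired}, a couple $D_i$ lies in $D(c_1)$ precisely when $c_1(D_i)\notin\mathbb{D}$. A couple disjoint from $S(c_1)$ is fixed by $c_1$, hence not broken; a couple contained in $S(c_1)$ cannot occur by the previous paragraph; and a couple meeting $S(c_1)$ in exactly one point $a$ has image $\{c_1(a),t(a)\}$, which is never a couple since $c_1(a)\in S(c_1)$ while $t(a)$ is fixed. Thus $D(c_1)=\{\{a,t(a)\}:a\in S(c_1)\}$, and therefore $DS(c_1)=S(c_1)\cup t(S(c_1))$, a disjoint union of $2|S(c_1)|$ elements.

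Now comes the core step. Since each vertex of $\Gamma_x$ lies on exactly one straight and one curved edge, the connected components of $\Gamma_x$ are exactly the orbits of the group generated by the two involutions $\rho(j)=t(j)$ (the straight-edge partner, read off from the definition of $r_i$) and $\gamma(j)=x^{-1}tx(j)$ (the curved-edge partner, read off from $b_i=(v_{x^{-1}(2i-1)}:v_{x^{-1}(2i)})$). I would evaluate these on $S(c_1)\cup t(S(c_1))$ using Condition (2). For $a_k\in S(c_1)$ one has $\rho(a_k)=t(a_k)$ and $\gamma(a_k)=x^{-1}t(a_{k+1})=t(a_{k+1})$, the last equality because $t(a_{k+1})$ is fixed by $x$; for a partner point one gets $\gamma(t(a_k))=x^{-1}t(t(a_k))=x^{-1}(a_k)=a_{k-1}$. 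Hence $\rho$ and $\gamma$ both preserve $S(c_1)\cup t(S(c_1))$, while $\gamma\rho$ acts on $\{a_1,\dots,a_m\}$ as $a_k\mapsto a_{k-1}$, a single $m$-cycle. Consequently the orbit of $a_1$ under $\langle\rho,\gamma\rangle$ is exactly $S(c_1)\cup t(S(c_1))=DS(c_1)$, so the vertex set of the connected component through $v_{a_1}$ is $\{v_j:j\in DS(c_1)\}$, which is the assertion.

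The hard part will be the bookkeeping inside the curved-edge computation: one must apply Condition (2) at precisely the right moments to collapse $x^{-1}tx$ on the partner points $t(a_k)$ (where $x$ acts as the identity) as opposed to the points $a_k$ (where $x$ acts as $c_1$), and then confirm that neither involution carries the walk outside $S(c_1)\cup t(S(c_1))$, so that the component is neither larger nor smaller than $DS(c_1)$.
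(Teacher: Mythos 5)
Your proposal is correct and takes essentially the same approach as the paper: both arguments use condition (2) to see that each partner $t(a_k)$ is fixed by $x$, so that the straight edges $\{v_{a_k},v_{t(a_k)}\}$ and the curved edges $\{v_{a_{k-1}},v_{t(a_k)}\}$ chain the vertices indexed by $DS(c_1)=S(c_1)\cup t(S(c_1))$ into a single closed alternating cycle. Your packaging via the involutions $\rho=t$ and $\gamma=x^{-1}tx$, whose joint orbits are the connected components, is a more rigorous rendering of the edge-tracing that the paper conveys with a figure, and it additionally makes explicit the identification $DS(c_1)=S(c_1)\cup t(S(c_1))$, which the paper leaves implicit.
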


\begin{proof}
As $S(c_1)\cap S(x_1)=\emptyset$ it follows that $S(x)=S(c_1)\cup S(x_1)$ 
and hence $S(c_1)\cap t(S(c_1))=\emptyset$. 
Therefore, no two elements of $S(c_1)$ are contained in the same couple.
Let $c_1=(i_1,\dots, i_k)$, hence $S(c_1)=\{i_1,\dots,i_k\}$. 
As $S(x)\cap t(S(c_1))=\emptyset$, we have $x(t(i_j))=c_1(t(i_j))=t(i_j)$. 
So, there is an edge between $v_{i_{j-1}}=(i_{j-1},x(i_{j-1}))=(i_{j-1},i_{j})$ and 
$(t(i_j),t(i_j))=(t(i_j),x(t(i_j))= v_{t(i_j)}$ as depicted in Figure~\ref{F:2}.
\begin{figure}[htp]
\begin{center}
\scalebox{.80}{
\begin{tikzpicture}

\node (a1) at (-10,0) {$v_{i_1}$};
\node (a2) at (-8,0) {$v_{t(i_1)}$};
\node (a3) at (-6,0) {$v_{i_2}$};
\node (a4) at (-4,0) {$v_{t(i_2)}$};
\node (a5) at (-2,0) {$v_{i_3}$};
\node (a6) at (0,0) {$v_{t(i_3)}$};
\node (a7) at (2,0) {};
\node (a8) at (3,0) {$\dots$};
\node (a9) at (4,0) {};
\node (a10) at (7,0) {$v_{i_k}$};
\node (a11) at (9,0) {$v_{t(i_k)}$};
\path[draw, -] (a1) -- (a2);
\path[draw, -] (a3) -- (a4);
\path[draw, -] (a5) -- (a6);
\path[draw, -] (a10) -- (a11);
\draw (a1) .. controls (-8,-1) and (-6,-1).. (a4);
\draw (a2) .. controls (-2,4) and (3,4) .. (a10);
\draw (a3) .. controls (-4,2) and (-2,2) .. (a6);
\draw (a5) .. controls (-1,-1) and (1,-1) .. (a7);
\draw (a9) .. controls (6,-1) and (8,-1) .. (a11);
\end{tikzpicture}
}
\end{center}
\label{F:2}
\caption{$v_{i_s}= (i_s, x(i_s))$ and $v_{t(i_s)}=(t(i_s),t(i_s))$.}
\end{figure}
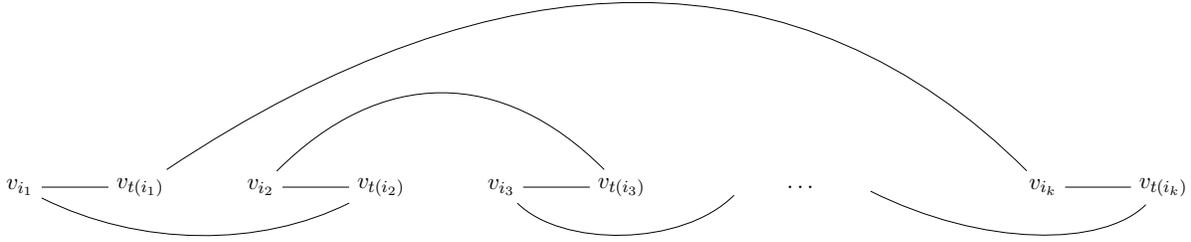

\end{proof}

\begin{Proposition}\label{P:S=w}
Let $\mu=(k_1,\dots,k_r)$ be a partition and $x\in K_{\mu}$ with $|S(x)|=w(\mu)$. 
Then $\lambda_x=\mu_x$.
\end{Proposition}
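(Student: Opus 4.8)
The plan is to first convert the numerical hypothesis $|S(x)|=w(\mu)$ into the purely combinatorial statement that no couple lies entirely inside $S(x)$, and then to read off the cycle structure of $x$ from the connected components of $\Gamma_x$ by means of Lemma~\ref{connected component}.

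For the first step, recall from Lemma~\ref{magnitude of a coset} that $|DS(x)|=2|D(x)|=2w(\mu)$, so the hypothesis is equivalent to $|DS(x)|=2|S(x)|$. I would classify each couple $D_j$ by the number of its elements lying in $S(x)$. Write $f$ for the number of couples with $D_j\subseteq S(x)$ and $h$ for the number with exactly one element in $S(x)$, so that $|S(x)|=2f+h$. A direct check shows that a couple belongs to $D(x)$ exactly when it is one of the $h$ ``half'' couples or one of the ``full'' couples that $x$ sends off $\mathbb{D}$, while a couple disjoint from $S(x)$ is fixed and hence never lies in $D(x)$. Thus $|D(x)|=h+f_1$, where $f_1\le f$ counts the full couples mapped off $\mathbb{D}$. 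Substituting into $2(h+f_1)=|DS(x)|=2|S(x)|=2(2f+h)$ yields $f_1=2f$, which together with $f_1\le f$ forces $f=0$. Hence no couple is contained in $S(x)$; equivalently, $S(x)\cap t(S(x))=\emptyset$.

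With this established, I would decompose $x$ into its nontrivial disjoint cycles, $x=c_1\cdots c_q$, and apply Lemma~\ref{connected component} to each $c_m$ with $x_1=\prod_{l\neq m}c_l$. The hypothesis $S(c_m)\cap S(x_1)=\emptyset$ holds because the cycles are disjoint, while $t(S(c_m))\cap S(x)=\emptyset$ is precisely the relation $S(x)\cap t(S(x))=\emptyset$ restricted to $S(c_m)\subseteq S(x)$. The lemma then identifies $DS(c_m)$ with the vertex set of a connected component of $\Gamma_x$. Since $S(x)\cap t(S(x))=\emptyset$, each element of $S(c_m)$ occupies its own couple whose partner is fixed by $x$, so $|D(c_m)|=|S(c_m)|$ and the component $DS(c_m)$ has $2|S(c_m)|$ vertices; that is, its half-size equals the length of $c_m$.

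It remains to see that $c_m\mapsto DS(c_m)$ is a bijection onto the nontrivial components, which I expect to be the main point requiring care. The sets $DS(c_m)$ are pairwise disjoint, since the $S(c_m)$ are disjoint and every couple meeting $S(x)$ contains exactly one support element; as each has at least four vertices, they are distinct nontrivial components. For surjectivity I would count couples: the size-$2$ components of $\Gamma_x$ are exactly the couples $D_j$ with $x(D_j)\in\mathbb{D}$, so the couples lying in nontrivial components are precisely those of $D(x)$, numbering $|D(x)|=w(\mu)$. As $\sum_m|D(c_m)|=\sum_m|S(c_m)|=|S(x)|=w(\mu)$, the components $DS(c_m)$ account for all of them. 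Therefore the multiset of lengths of the nontrivial cycles of $x$ coincides with the multiset of half-sizes of the nontrivial components of $\Gamma_x$, i.e.\ the parts of $\lambda_{(x)}$ exceeding $1$ agree with those of $\mu_{(x)}$. Subtracting $1$ from every part annihilates the parts equal to $1$ and preserves the rest, so $\lambda_x=\lambda_{(x)}-(1^{l(\lambda_{(x)})})=\mu_{(x)}-(1^{l(\mu_{(x)})})=\mu_x$, completing the proof.
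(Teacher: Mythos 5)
Your proposal is correct and follows essentially the same route as the paper's own proof: it converts the hypothesis $|S(x)|=w(\mu)$ into $S(x)\cap t(S(x))=\emptyset$ by a counting argument based on Lemma~\ref{magnitude of a coset}, then applies Lemma~\ref{connected component} cycle by cycle to identify the nontrivial components of $\Gamma_x$ with the sets $DS(c_m)$ and match coset type with cycle type. Your treatment is somewhat more explicit than the paper's in two places (the $f$, $h$, $f_1$ bookkeeping forcing $f=0$, and the couple-counting argument that the $DS(c_m)$ exhaust all nontrivial components, which the paper passes over tersely), but these are refinements of the same argument rather than a different approach.
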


\begin{proof}
Let $c_1\cdots c_s$ be the disjoint cycle decomposition of $x$. Here, we omit the cycles
with one element. 
Set $l_1=|S(c_1)|\geq\cdots\geq l_s=|S(c_s)|$ so that $\lambda_x=(l_1-1,\cdots,l_s-1)$. 
Put $x_i=c_i^{-1}x$. 
We are first going to show that $x=c_ix_i$ satisfies the hypothesis of the 
Lemma~\ref{connected component}. 
By definition, the cardinality of $D(x)$ is less than or equal to the number of couples 
$D_i$ such that $D_i\cap S(x)\neq \emptyset$. 

We know from Lemma~\ref{magnitude of a coset} that $w(\mu)=|D(x)|$, hence 
$|S(x)|\geq |w(\mu)|$. As $|S(x)|=|w(\mu)|$, it follows that $S(x)$ 
contains elements from $|S(x)|$ many different couples, hence, the integers in $S(x)$ are not partners 
of each other. In other words, $t(S(c_i))\cap S(x)=\emptyset$. Since $S(x)=S(c_i)\cup S(x_i)$, it follows 
in particular that $t(S(c_i))\cap S(x_i)=\emptyset$. 

Now, by applying Lemma \ref{connected component}, we see that the graph $\Gamma_x$ 
has connected components $C_1,\dots, C_k$ with the edge sets $DS(c_1),\dots, DS(c_r)$.
The length of each $C_i$ is given by $2|S(c_i)|$ for $i=1,\dots, r$. 
The vertices on the rest of the graph are not contained in $DS(x)$, hence, 
the rest of $\Gamma_x$ of is a disjoint collection of cycles of length $2$. 
Therefore, the stable coset type $\mu_x$ of $x$ is $(l_1-1,\dots,l_s-1)$, 
which is equal to the stable cycle type $\lambda_x$. 
\end{proof}

\begin{Corollary}\label{C:see above}
Let $x\in K_\mu$ be as in the hypothesis of Proposition~\ref{P:S=w}. 
Then 
$$
S(x) \cap tS(x) =\emptyset.
$$
\end{Corollary}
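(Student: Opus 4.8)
The plan is to read off the conclusion directly from the work already done in the proof of Proposition~\ref{P:S=w}, since the hypotheses are identical. Recall that the Corollary assumes $x\in K_\mu$ with $|S(x)|=w(\mu)$, and we want $S(x)\cap t(S(x))=\emptyset$, i.e.\ no element moved by $x$ is the partner of another element moved by $x$; equivalently, every couple $D_i$ meets $S(x)$ in at most one point.

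First I would recall the counting inequality established in the proof of the Proposition: by Lemma~\ref{magnitude of a coset} we have $w(\mu)=|D(x)|$, and $|D(x)|$ is at most the number of couples $D_i$ with $D_i\cap S(x)\neq\emptyset$. Writing $N$ for that number of couples, we have the double bound $w(\mu)=|D(x)|\le N$. On the other hand, $S(x)$ is distributed among these $N$ couples, so $|S(x)|\le 2N$; but more to the point, if some couple met $S(x)$ in both of its elements, then the $N$ couples meeting $S(x)$ would account for strictly more than $N$ elements, giving $|S(x)|>N\ge w(\mu)$, contradicting the hypothesis $|S(x)|=w(\mu)$. So the key is simply to compare $|S(x)|$ against the number of distinct couples that $S(x)$ touches.

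To make this precise I would argue as follows. Let $N$ be the number of couples $D_i$ with $D_i\cap S(x)\neq\emptyset$. Since every element of $S(x)$ lies in exactly one couple, and each such couple contains at most two elements of $S(x)$, we have $N\le|S(x)|\le 2N$. The equality $|S(x)|=w(\mu)=|D(x)|\le N$ forces $|S(x)|\le N$, which combined with $N\le|S(x)|$ gives $N=|S(x)|$. But $N=|S(x)|$ means each of the $N$ touched couples contributes exactly one element to $S(x)$ (if any contributed two, we would get $|S(x)|\ge N+1$). Consequently no couple contains two elements of $S(x)$, which is exactly the statement $S(x)\cap t(S(x))=\emptyset$.

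The main obstacle, such as it is, is bookkeeping rather than conceptual: one must be careful that the inequality $|D(x)|\le N$ goes in the right direction and that the chain $|D(x)|\le N\le|S(x)|$ collapses correctly under the hypothesis. The cleanest route avoids reproving anything by citing the sentence in the proof of Proposition~\ref{P:S=w} that already deduces ``the integers in $S(x)$ are not partners of each other''; that sentence \emph{is} the Corollary, so the Corollary can be presented either as an immediate restatement of that line or, as above, as a short self-contained counting argument. I would favor the self-contained version for clarity, since it makes transparent exactly where the sharp hypothesis $|S(x)|=w(\mu)$ is used.
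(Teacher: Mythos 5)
Your proof is correct and takes essentially the same approach as the paper: the paper's proof of Corollary~\ref{C:see above} simply refers to the second paragraph of the proof of Proposition~\ref{P:S=w}, which contains exactly the counting chain $w(\mu)=|D(x)|\le N\le |S(x)|$ (with $N$ the number of couples meeting $S(x)$) that you spell out and collapse under the hypothesis $|S(x)|=w(\mu)$. Your self-contained version just makes that implicit bookkeeping explicit, which is a reasonable presentational choice.
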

\begin{proof}
See the second paragraph of the proof of Proposition~\ref{P:S=w}.
\end{proof}

Let $K^m_{\mu}$ denote the subset of $K_{\mu}$ consisting of permutations with support size $m$. 
\begin{Corollary}\label{weight conjugacy class}
For $m=w(\mu)$ the set $K^m_{\mu}$ is equal to full $B_{\infty}$-conjugacy class in $S_{2\infty}$.
\end{Corollary}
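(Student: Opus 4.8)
The plan is to prove the two opposite inclusions separately: that $K^m_\mu$ is contained in a single $B_\infty$-conjugacy class, and that this conjugacy class does not spill outside $K^m_\mu$. The second inclusion is the soft one, so I would dispatch it first. Suppose $y=bxb^{-1}$ with $x\in K^m_\mu$ and $b\in B_\infty$. Since $b,b^{-1}\in B_\infty$ we get $y\in B_\infty x B_\infty = K_\mu$, so $\mu_y=\mu$; and because conjugation is a bijection, $S(y)=b(S(x))$ forces $|S(y)|=|S(x)|=m$. Hence $y\in K^m_\mu$, which shows that the entire $B_\infty$-conjugacy class of any element of $K^m_\mu$ stays inside $K^m_\mu$.

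The substance lies in the first inclusion: any two $x,y\in K^m_\mu$ are $B_\infty$-conjugate. Here I would exploit the two results just established. Since $m=w(\mu)$, Proposition~\ref{P:S=w} gives $\lambda_x=\mu_x=\mu=\lambda_y$, so $x$ and $y$ have identical cycle types in $S_\infty$ and their nontrivial cycles can be matched in bijective pairs of equal length; Corollary~\ref{C:see above} tells us that both supports are unpaired, $S(x)\cap t(S(x))=\emptyset$ and likewise for $y$. Writing a matched pair as $c=(a_1\to\cdots\to a_k)$ in $x$ and $d=(a'_1\to\cdots\to a'_k)$ in $y$, I would set $b(a_j)=a'_j$; a direct check gives $bxb^{-1}=y$ on the relevant orbits, and outside $S(y)=b(S(x))$ both sides fix every point.

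The main obstacle, and the precise reason Corollary~\ref{C:see above} is needed, is to guarantee that this conjugating bijection can be taken inside $B_\infty$, i.e.\ compatible with the couple structure $\mb{D}$ (Remark~\ref{B acts on D}). Because the supports are unpaired, $S(x)$ and $t(S(x))$ are disjoint and each couple meets $S(x)$ in at most one point, so I can extend $b$ unambiguously by declaring $b(t(i))=t(b(i))$ for $i\in S(x)$; this carries the $m$ couples constituting $S(x)\cup t(S(x))$ bijectively onto the $m$ couples of $S(y)\cup t(S(y))$. Since these two completed supports comprise the same number $m$ of couples, the remaining couples of $\N$ can be paired off by a couple-preserving bijection that is the identity off a finite set, producing an element $b\in B_\infty$ with $bxb^{-1}=y$. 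Combining this with the second inclusion shows that $K^m_\mu$ coincides with the $B_\infty$-conjugacy class of any of its elements, as claimed.
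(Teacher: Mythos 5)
Your proposal is correct and follows essentially the same route as the paper's own proof: both establish the easy inclusion that a $B_\infty$-conjugate of an element of $K^m_\mu$ stays in $K^m_\mu$, then use Proposition~\ref{P:S=w} to match the cycle decompositions of two elements of $K^m_\mu$, define the conjugator cycle-by-cycle, and invoke Corollary~\ref{C:see above} to extend it to partners via $t$ so that it lies in $B_\infty$. If anything, your completion of the conjugating map to a genuine couple-preserving permutation of $\N$ off the supports is spelled out a bit more carefully than in the paper, which simply declares $U(i)=i$ off $S(x,y)\cup tS(x,y)$.
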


\begin{proof}
Clearly, if $x\in K^m_{\mu}$, then any $B_{\infty}$-conjugate of $x$ is 
contained in $K^m_{\mu}$, also. We are going to show that there exists a single
$B_{\infty}$-conjugacy class in $K^m_{\mu}$. 
To this end, let $x$ and $y$ be two permutations from $K_\mu^m$ with 
disjoint cycle decompositions $x=x_1\cdots x_r$ and $y=y_1\cdots y_{r'}$.
By our hypothesis and Proposition~\ref{P:S=w}, we see that $r=r'$, and that 
$|S(x_i)|=|S(y_i)|=a_i$, $i=1,\dots, r$. 
It follows that there exists a bijection $U : S(x)\rightarrow S(y)$ which restricts to 
bijections $S(x_i)\rightarrow S(y_i)$ for all $i=1,\dots,r$, and hence, $U^{-1}yU=x$ holds.  
Indeed, we define $U$ as follows. If $x= ( i_1\dots i_{r_1}) \cdots (i_{r_{s}}\dots i_{r_{s'}})$,
and $y= ( j_1\dots j_{r_1}) \cdots (j_{r_{s}} \dots j_{r_{s'}})$ are the cycle decompositions 
of $x$ and $y$, respectively, then $U$ is defined by sending $i_q$ to $j_q$. 
Now, we insist on the conditions: 1) $U$ maps $t(i_q)$ to $t(j_q)$ (this makes 
sense by Corollary~\ref{C:see above});
2) if $i$ is not in $S(x,y)\cup tS(x,y)$, then $U(i)= i$. Clearly, $U\in B_\infty$,
and the proof is complete.
\end{proof}

\begin{Example}\label{main example}
Let $\mu=(l_1-1,\dots,l_k-1)$. Define $c_1=(135\dots 2l_1-1)$, 
$c_2=(2l_1+1\dots 2(l_1+l_2)-1)$, $\dots$, $c_k=(2(l_1+\cdots+l_{k-1}+1)\dots2(l_1+\cdots+l_{k}-1))$. 
Then $c=c_1\cdots c_k$ is in $K^{w(\mu)}_{\mu}\cap S_{2w(\mu)}$.
\end{Example}

\subsection{$B_{\infty}$-actions}

In this subsection, following~\cite{AC12}, we introduce two actions of $B_{\infty}\times B_{\infty}$ 
on $S_{2\infty}\times S_{2\infty}$ which turn out to be equivalent actions. 
Analogous actions are defined by Farahat and Higman in their classical paper~\cite{FH} on
the center of the symmetric group.

Let $(a,b)\in B_{\infty}\times B_{\infty}$ and $(x,y)\in S_{2\infty}\times S_{2\infty}$.
Define 
\begin{itemize}
\item The straightforward action: $(a,b)\cdot_s(x,y)=(axb^{-1},ayb^{-1})$.
\item The reverted action: $(a,b)\cdot_r(x,y)=(axb^{-1},bya^{-1})$.
\end{itemize}
Clearly, the map $\phi : (S_{2\infty} \times S_{2\infty}, \cdot_s) \rightarrow (S_{2\infty} \times S_{2\infty},\cdot_r)$ 
defined by 
\begin{equation}\label{action equivalance}
\phi((x,y))=(x,y^{-1})
\end{equation}
is an equivariant bijection. In other words, $\phi(z\cdot_s (x,y))= z \cdot_r \phi(x,y)$.
If $L$ is an orbit with respect to the straightforward action then $\varphi(L)$ is an orbit with 
respect to the reverted action. Thus, if $O_s$ (respectively $O_r$) denotes the set of orbits of
 $\cdot_s$ (respectively of $\cdot_r$) in $S_{2\infty}^2$, then $\phi$ induces a bijection from
 $O_s$ to $O_r$.
For an orbit (either straightforward, or reverted) $L$, the intersection $L\cap S_{2n}$ is denoted by $L(n)$.

\begin{Remark}\label{product weight}
Let $L$ be an orbit of the reverted action. Then the integer 
$c_L:=|S(xy)|$, called the {\em product-weight of $L$}, 
is independent of the element $(x,y)$ chosen from $L$.
\end{Remark}

\section{Gap}

In this section we explain the gap in the proof of [Theorem 4.2 of~\cite{AC12}] in 
more detail. We start with paraphrasing its statement:

\begin{Theorem}\label{main theorem}
Let $\mu,\lambda,\nu$ be three partitions. 
Then there exists a polynomial $f_{\mu\lambda}^{\nu} (x) \in \Q[x]$ such that 
$b_{\mu\lambda}^{\nu}(n)=2^{n-w(\nu)}f_{\mu\lambda}^{\nu}(n)$ for large enough $n\in \Z$.
\end{Theorem}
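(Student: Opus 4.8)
The plan is to compute $b_{\mu\lambda}^{\nu}(n)$ as a factorization count, but with the full double coset $K_\nu(n)$ replaced by its minimal-support subset. Since $\chi_\mu(n)*\chi_\lambda(n)$ is $B_n$-biinvariant, the coefficient of every element of the double coset $K_\nu(n)$ is the same integer $b_{\mu\lambda}^{\nu}(n)$; summing this coefficient only over the subset $K_\nu^{w(\nu)}(n)=K_\nu^{w(\nu)}\cap S_{2n}$ therefore gives
$$
b_{\mu\lambda}^{\nu}(n)\,|K_\nu^{w(\nu)}(n)| = |V(K_\mu(n)\times K_\lambda(n);\,K_\nu^{w(\nu)}(n))|.
$$
This replacement is the crux of the correction: by Corollary~\ref{weight conjugacy class} the set $K_\nu^{w(\nu)}$ is a single $B_\infty$-conjugacy class, and by Proposition~\ref{P:S=w} together with Lemma~\ref{magnitude of a coset} every $z$ in it has $|S(z)|=w(\nu)$ while $|DS(z)|=2w(\nu)$. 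By Corollary~\ref{C:see above} we also have $S(z)\cap t(S(z))=\emptyset$, so $z$ moves exactly $w(\nu)$ points lying in $w(\nu)$ distinct couples, one point each, and fixes the remaining $n-w(\nu)$ couples pointwise. This is precisely the rigidity that the full double coset lacked and that the original ``uniform growth'' argument tacitly assumed.

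Next I would fix one representative $z\in K_\nu^{w(\nu)}(n)$, say the canonical element of Example~\ref{main example}, so that the right-hand count is $|K_\nu^{w(\nu)}(n)|$ times the number of pairs $(a,b)$ with $a\in K_\mu(n)$, $b\in K_\lambda(n)$ and $ab=z$. For any such pair the completed support $CS(a,b)=DS(z)\cup DS(a)\cup DS(b)$ has cardinality at most $2\bigl(w(\mu)+w(\lambda)+w(\nu)\bigr)$, a bound independent of $n$; call the couples meeting it the \emph{core}. By Lemma~\ref{properties of CS}, on every couple disjoint from $CS(a,b)$ the permutations $a$ and $b$ send couples to couples and are mutually inverse, so on these \emph{inert} couples the pair is nothing but an arbitrary signed permutation together with its inverse. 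A factorization is thus encoded by a bounded configuration on the core together with a free signed permutation of the inert couples.

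I would then count by organizing the solution set into orbits of the reverted action $\cdot_r$ of $B_\infty\times B_\infty$, using Remark~\ref{product weight} to note that every contributing orbit has product-weight $c_L=|S(z)|=w(\nu)$. Only finitely many such orbits meet $K_\mu(n)\times K_\lambda(n)$, and for large $n$ this list is independent of $n$. Each intersection is then counted in Farahat--Higman fashion: embed the bounded core into the $n$ couples (a number polynomial in $n$), fix one of finitely many core shapes, and act freely on the remaining inert couples, which contributes the factorial and the power of two. The denominator $|K_\nu^{w(\nu)}(n)|$ is counted the same way but carries no inert factor, since such a $z$ fixes everything off its $w(\nu)$ core couples; hence it is a polynomial in $n$. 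Forming the ratio and summing over the finitely many orbits produces the stated form $b_{\mu\lambda}^{\nu}(n)=2^{n-w(\nu)}f_{\mu\lambda}^{\nu}(n)$ for large $n$; in the normalization of the Introduction this reads $2^{n-w(\nu)}\,n!\,f_{\mu\lambda}^{\nu}(n)$, the factor $n!$ being the common scaling of the double-coset cardinalities (as the case $\mu=\lambda=\nu=\varnothing$, where $b=2^{n}n!$, already shows).

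The step I expect to be hardest is the bookkeeping of the power of two. The exponent $n-w(\nu)$ should come from the fact that $z$ disturbs exactly $w(\nu)$ of the $n$ couples, leaving $n-w(\nu)$ couples each carrying a binary swap; but the pair $(a,b)$ may itself break couples in $D(a,b)\setminus D(z)$, so the naive inert count for a fixed orbit produces $2^{\,n-p}$, where $p\ge w(\nu)$ is the number of core couples of the pair. Showing that, after dividing by $|K_\nu^{w(\nu)}(n)|$ and summing over orbits, the surviving two-power is uniformly $2^{\,n-w(\nu)}$---the orbit-dependent discrepancies $2^{\,w(\nu)-p}$ being absorbed into the polynomial $f_{\mu\lambda}^{\nu}$---is exactly the accounting that the original proof bypassed, and it is the rigidity forced by $|S(z)|=w(\nu)$ on the minimal-support subset that makes it tractable.
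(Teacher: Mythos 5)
Your overall strategy is the paper's own: your opening identity is exactly Lemma~\ref{main lemma}, your use of Corollary~\ref{weight conjugacy class}, Proposition~\ref{P:S=w} and Corollary~\ref{C:see above} to rigidify $K_\nu^{w(\nu)}$ matches the paper, and the plan of decomposing into reverted orbits and counting in Farahat--Higman fashion is the paper's plan. The gap is in the step that carries your whole count: the claim that a factorization ``is encoded by a bounded configuration on the core together with a free signed permutation of the inert couples.'' Lemma~\ref{properties of CS} only says that off $CS(a,b)$ the permutations $a$ and $b$ carry couples to couples and invert each other; it does \emph{not} say that they map inert couples to inert couples, and in general they do not, even when $ab$ has minimal support. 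Concretely, take $z=(1\,3)\in K_{(1)}^{2}$, $a=(1\,5\,7)(2\,6\,8)(3\,9)(4\,10)$, and $b=a^{-1}z=(1\,9\,3\,7\,5)(2\,8\,6)(4\,10)$. Then $ab=z$, $a\in B_\infty$ so $DS(a)=\emptyset$, while $DS(b)=\{1,2,3,4\}$, hence $CS(a,b)=\{1,2,3,4\}$; the couple $\{5,6\}$ is inert, yet $b(\{5,6\})=\{1,2\}$ lands inside the core, and $a$ carries $\{1,2\}$ back out to $\{5,6\}$, exactly as Lemma~\ref{properties of CS} permits. Conjugating this example by elements of $B_\infty$ fixing $\{1,2,3,4\}$ (or lengthening the cycles of $a$) produces such pairs with the offending inert couple in any of the roughly $n$ available positions, so the factorizations missed by the core-times-inert parametrization grow in number with $n$: your count of each orbit's $n$-part, and hence of $|V(n)|$, does not go through. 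For the same reason, your assertion that ``only finitely many such orbits meet $K_\mu(n)\times K_\lambda(n)$, and for large $n$ this list is independent of $n$'' is left unproved: bounded completed support does not by itself bound the ordinary support of orbit elements, which is exactly what must be controlled to pin each orbit to a representative in a fixed finite window.

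The paper closes both holes without ever parametrizing factorizations elementwise. First, Lemma~\ref{shrinking S} shows that the reverted action can shrink the full support of a representative into its completed support, whence every orbit meets $S_{2m}\times S_{2m}$ with $m=w(\mu)+w(\lambda)+w(\nu)$ (Proposition~\ref{non-empty part of L}); this is what makes the orbit list finite and stable (Corollary~\ref{C:main corollary}). Second, orbit sizes are computed by orbit--stabilizer for such a good representative (Lemma~\ref{orbit cardinality}): the inert couples enter as the diagonal subgroup $B_{n-m_L}$ of the \emph{stabilizer}, so one divides $(2^n n!)^2$ by $k(L)\,2^{n-m_L}(n-m_L)!$ rather than multiplying by a count of inert configurations, and all of the ``bad'' pairs above are counted automatically. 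Dividing by $|K_\nu^{w(\nu)}(n)|=2^n n!/\bigl(k_\nu\,2^{n-w(\nu)}(n-w(\nu))!\bigr)$ then gives each ratio the exact form $2^{n-w(\nu)}n!$ times a polynomial in $n$, which settles in one stroke the two-power bookkeeping that your final paragraph concedes is missing. So the strategy is right, but the proof needs the shrinking lemma and the stabilizer computation; the direct product-structure count is unsound.
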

\begin{Remark}
In~\cite{AC12}, the multiplicand $2^{n-w(\nu)} n!$ is missing, also.
\end{Remark}

Quoting from the proof of Theorem 4.2:

\begin{quote}
``Let $\lambda$, $\mu$ and $\nu$ be the stable coset types as given in the hypothesis. 
We already know that $b_{\lambda\, \mu}^\nu (n)=0$ if 
$|\nu| > |\lambda| + |\mu|$. To prove the other statements, let $\mc{A}$ denote the set of pairs 
$(x,y) \in S_\infty \times S_\infty$ satisfying $x\in K_\lambda,\ y\in K_\mu,\  xy\in K_\nu$. 
Then $\mc{A}$ is stable under the reverted action of $B_\infty \times B_\infty$. 	
Let  $\mc{A}(n)$ denote the intersection $\mathcal{A} \cap (S_{2n} \times S_{2n})$. 
Hence, $b_{\lambda\, \mu}^\nu (n) = | \mathcal{A}(n) | / |K_\nu (n) |$.

Let $\{A_1,\ldots, A_r\}$ denote the set of orbits of $B_\infty \times B_\infty$ in $\mathcal{A}(n)$. 
Then
$b_{\lambda\, \mu}^\nu (n) = \frac{| \mathcal{A}(n) | }{ |K_\nu (n) |} = 
\sum_{i=1}^r \frac{ |A_i| }{ |K_\nu (n) | }$.''
\end{quote}

Then in~\cite{AC12}, the proof is completed by using the polynomiality (in $n$) 
of the expressions $\frac{|A_i|}{|K_{\nu}(n)|}$ $(i=1,\cdots,r)$, since the structure constant is equal to theirs sums.
However, as $n$ grows the number of orbits $\{A_1,\ldots, A_r\}$ in $\mc{A}(n)$, hence the number of 
polynomial summands of the right hand side of $\frac{| \mathcal{A}(n) | }{ |K_\nu (n) |} = 
\sum_{i=1}^r \frac{ |A_i| }{ |K_\nu (n) | }$, increases. 
Let $\{A_1,\ldots, A_r\}$ be the set of all reverted orbits in $\mc{A}(n)$.
Without loss of generality, we may assume that $c_{A_1}$ is maximal. 
Let $(x,y)\in A_1$ and consider the element $(x_1,y_1)=((2n+1 \; 2n+2)x,y)$, 
which is an element of $\mc{A}(n+1)$. However, since $|S(x_1y_1)|=c_{L_1}+2$, and $c_{L_1}$
is maximal, $(x_1,y_1)$ is not contained in any of the orbits $A_1,\ldots, A_r$. 
This shows that the number $r$ of orbits contained in $\mc{A}(n)$ gets bigger as $n$ grows.

\begin{Remark}
The set $\mc{A}(n)$ defined in quoted paragraph is nothing but 
$V(K_{\mu}(n)\times K_{\lambda}(n);K_{\nu}(n))$.
\end{Remark}

To fix the problem, we are going to replace the set $K_{\nu}(n)$ with $K_{\nu}^m$,  
where $m=w(\nu)$.

\section{Fix}

\begin{Lemma}
Let $L\in O_r$ be a reverted orbit, and let $(x,y)$ be an arbitrary element from $L$. 
Define $m_L=m_L(x,y)$ (called the {\em magnitude} of the reverted orbit $L$), by the equation 
\begin{equation}\label{magnitude of a reverted orbit}
2m_L=|S(xy)|+|t(S(xy))|+|DS(x)|+|DS(y)|. 
\end{equation} 
Then 
$|CS(x',y')|\leq 2m_L$ for all $(x',y')\in L$, hence $m_L$ is independent of the element $(x,y)$. 
\end{Lemma}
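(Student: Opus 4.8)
We need to show $m_L$ is well-defined on a reverted orbit $L$. The defining equation $2m_L = |S(xy)| + |t(S(xy))| + |DS(x)| + |DS(y)|$ computes a quantity from a representative $(x,y)$, and the claim is that (i) $|CS(x',y')| \le 2m_L$ for every $(x',y') \in L$, and (ii) consequently $m_L$ does not depend on the choice of representative. Let me think about how these two pieces fit together.

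Let me reconsider the logic. The quantity $2m_L$ is defined via a *particular* representative $(x,y)$. The lemma asserts $|CS(x',y')| \le 2m_L$ for *all* $(x',y')$ in $L$. Then independence of the representative should follow: if I compute $2m_L$ from $(x,y)$ and $2m_{L}'$ from $(x',y')$, symmetry of the bound gives $|CS| \le \min$, but that's not quite enough by itself — I need to connect $|CS(x,y)|$ back to the four-term sum.

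\textbf{The strategy.} The quantity $2m_L$ in (\ref{magnitude of a reverted orbit}) is a \emph{sum} of four cardinalities, whereas $CS(x',y')=S(x'y')\cup t(S(x'y'))\cup DS(x')\cup DS(y')$ is the \emph{union} of the four corresponding sets. The plan is to separate two issues. First, I would not attack $|CS(x',y')|$ head-on (its cardinality is genuinely not constant along $L$, since the four pieces get translated by different group elements and their overlaps change); instead I bound it crudely by subadditivity,
\begin{equation*}
|CS(x',y')|\leq |S(x'y')|+|t(S(x'y'))|+|DS(x')|+|DS(y')|=:N(x',y'),
\end{equation*}
and then show that the right-hand side $N$ is constant on $L$, equal to $2m_L$. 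Since $m_L$ is by definition $N/2$ evaluated at a representative, constancy of $N$ is exactly the asserted independence of $m_L$, and the displayed inequality then reads $|CS(x',y')|\leq 2m_L$. So everything reduces to proving that each of the four summands of $N$ is invariant under the reverted action.

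\textbf{The product term.} Writing $(x',y')=(a,b)\cdot_r(x,y)=(axb^{-1},bya^{-1})$ with $a,b\in B_\infty$, a direct cancellation gives $x'y'=axb^{-1}bya^{-1}=a(xy)a^{-1}$, so $x'y'$ is a $B_\infty$-conjugate of $xy$. Hence $S(x'y')=a\bigl(S(xy)\bigr)$, giving $|S(x'y')|=|S(xy)|$ (this is the product-weight invariance of Remark~\ref{product weight}). Because $a\in B_\infty$ stabilizes $\mathbb{D}$, it commutes with the partner involution $t$ (Remark~\ref{B acts on D}), so $t(S(x'y'))=a\bigl(t(S(xy))\bigr)$ and $|t(S(x'y'))|=|t(S(xy))|$ as well.

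\textbf{The $DS$-terms.} This is the crux, since $x'=axb^{-1}$ arises from $x$ by two-sided multiplication rather than conjugation, so invariance of $|DS(x')|$ is not automatic. The key elementary fact I would isolate is that for $a\in B_\infty$ a $2$-element set $E$ is a couple if and only if $a(E)$ is a couple: if $E\in\mathbb{D}$ this is Remark~\ref{B acts on D}, and if $E=\{i,j\}$ is not a couple then $a(E)$ could be a couple only if $a(j)=t(a(i))=a(t(i))$, forcing $j=t(i)$, a contradiction. Granting this, I compute $D(x')$: for a couple $D_i$ put $D_j=b^{-1}(D_i)$ (again a couple, as $b^{-1}\in B_\infty$); then $x'(D_i)=a\bigl(x(D_j)\bigr)$ lies in $\mathbb{D}$ if and only if $x(D_j)\in\mathbb{D}$. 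Consequently $D_i\in D(x')$ if and only if $b^{-1}(D_i)\in D(x)$, i.e. $D(x')=b\cdot D(x)$ and $DS(x')=b\bigl(DS(x)\bigr)$, whence $|DS(x')|=|DS(x)|$. The same computation applied to $y'=bya^{-1}$ (where the right multiplier is now $a$) yields $|DS(y')|=|DS(y)|$. The point is that the left multiplier never affects couple-versus-noncouple status of the images, which is precisely why these cardinalities survive the two-sided action.

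\textbf{Conclusion.} Combining the three preceding steps gives $N(x',y')=N(x,y)$ for every $(x',y')\in L$, so $N$ is constant on $L$ and equals $2m_L$; this is the independence of $m_L$, and the subadditivity bound then yields $|CS(x',y')|\leq 2m_L$. The only genuine obstacle is the invariance of the $DS$-terms under two-sided multiplication, and once the couple-preservation property of $B_\infty$ pins the right multiplier as the sole relevant factor, the rest is bookkeeping. (If desired, one may simplify the final tally using Lemma~\ref{properties of the modified support}(1) together with the fact that $t$ is an involution, which collapse $m_L$ to $|S(xy)|+|D(x)|+|D(y)|$, but this is not needed for the statement.)
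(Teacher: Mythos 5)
Your proof is correct and takes essentially the same route as the paper: bound $|CS(x',y')|$ by subadditivity of cardinality over the union, then show each of the four summands is invariant under the reverted action, using $x'y'=a(xy)a^{-1}$ for the product terms and invariance of $|DS|$ along $B_\infty$-double cosets for the remaining two. The only difference is that you prove $|DS(axb^{-1})|=|DS(x)|$ from scratch via the couple-preservation property of $B_\infty$, whereas the paper simply cites Lemma~\ref{magnitude of a coset}.
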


\begin{proof}
We start with a fixed element $(x,y)$ from $L$. 
Then $|CS(x,y)|\leq 2m_L$. Let $(x',y')\in L$. 
Then $(axb^{-1},bya^{-1})=(x',y')$ for some $a,b\in B_{\infty}$. 
The equation $x'y'=axya^{-1}$ implies that $|S(xy)|=|S(x'y')|$. 
By Lemma \ref{magnitude of a coset}, $|D(x)|=|D(x')|$ and $|D(y)|=|D(y')|$. 
Hence $|DS(x)|=|DS(x')|$, and $|DS(y)|=|DS(y')|$, proving that $m_L$ is well defined. 
\end{proof}

\begin{Proposition}\label{non-empty part of L}
For any reverted orbit $L$, the intersection $L(m_L):=L\cap (S_{2m_L}\times  S_{2m_L})$ 
is non-empty.
\end{Proposition}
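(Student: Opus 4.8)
\section*{Proof proposal}

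The plan is to start from an arbitrary $(x,y)\in L$ and use the reverted action to push the supports of both coordinates into $\{1,\dots,2m_L\}$; this is where the completed-support bookkeeping pays off. By the preceding lemma, $|CS(x,y)|\le 2m_L$, and since $CS(x,y)$ is closed under $t$ it is a union of at most $m_L$ couples. Hence it suffices to produce an orbit element $(x',y')$ with $S(x')\cup S(y')\subseteq CS(x',y')$: once the ordinary support sits inside $CS$, a single simultaneous conjugation $(g,g)\cdot_r(x',y')=(gx'g^{-1},gy'g^{-1})$ by a $g\in B_\infty$ carrying the couples of $CS(x',y')$ onto $D_1,\dots,D_{m_L}$ lands the pair in $S_{2m_L}\times S_{2m_L}$, because for $g\in B_\infty$ both the ordinary supports and $CS$ transform by $g$.

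The key observation concerns the complement $U:=\mathbb{N}\setminus CS(x,y)$. For $i\in U$ we have $i\notin S(xy)$, so $xy(i)=i$ and $y(i)=x^{-1}(i)$; and $i\notin DS(x)\cup DS(y)$, so by Remark~\ref{B acts on D} both $x$ and $y$ carry the couple containing $i$ to a couple. Thus every couple $D\subseteq U$ has the property that $x(D)$ and $x^{-1}(D)=y(D)$ are again couples, so $D$ sits in a chain $\cdots\to x^{-1}(D)\to D\to x(D)\to\cdots$ along which $x$ acts couple-wise; such a chain either closes up inside $U$ or exits into $CS$ at both ends. This is what rules out the awkward configurations: since $U\cap(DS(x)\cup DS(y))=\emptyset$, any auxiliary permutation supported on couples of $U$ automatically lies in $B_\infty$, and applying it never breaks a couple.

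I would then eliminate the couples of $U$ from the support one at a time, inducting on $|S(x)\cup S(y)|$ via reverted-action moves with trivial first coordinate, namely $(x,y)\mapsto(xb^{-1},by)$ for suitable $b\in B_\infty$. If $D=\{p,t(p)\}\subseteq U$ lies on a chain that closes up inside $U$, I delete the whole cycle by taking $b$ to equal $x$ on that cycle and the identity elsewhere; one checks that $xb^{-1}$ and $by$ both lose the cycle while $xy$, $D(x)$ and $D(y)$—hence $CS$—are unchanged. If $D$ lies on an excursion chain with both ends in $CS$, I peel it from the forward end: for the last $U$-couple, writing $r=x^{-1}(p)$, if $r\in U$ the couple transposition $b=(p\to r)(t(p)\to t(r))$ removes $D$ and shortens the chain, while the remaining case is a single sandwiched couple $D_e\to D\to D_f$ with $e=x^{-1}(p)\in CS$, handled by $b=(p\to e)(t(p)\to t(e))$, which again removes $D$ from both coordinates and preserves $D(x)$, $D(y)$ and $xy$. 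Each move decreases $|S(x)\cup S(y)|$ by two, so after finitely many steps $S(x)\cup S(y)\subseteq CS$, and the relabelling of the first paragraph finishes the proof.

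The main obstacle is checking that each of these moves removes a couple from both coordinates at once while fixing $CS$. The second coordinate is the delicate part, since $y$ is pinned down only on $U$, where $y=x^{-1}$, and not at the $CS$-endpoints $e,f$; the observation of the second paragraph is exactly what is needed, because it forces $y(D)=x^{-1}(D)$ to be a couple for every $D\subseteq U$ and so excludes the a priori possible case in which a move would break $D$ or leave $y$ untouched on $D$. Verifying that $D(x)$ and $D(y)$ are unchanged—so that $CS$ is genuinely preserved along the reduction—is then a bounded, local computation on the four points $\{p,t(p),e,t(e)\}$.
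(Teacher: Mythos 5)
Your proposal is correct and follows essentially the same route as the paper: the paper's Lemma~\ref{shrinking S} performs exactly your peeling move $b=(i\to y(i))(t(i)\to t(y(i)))$ for $i\notin CS(x,y)$ (its special case $y(i)=t(i)$ covering your one-couple cycles), verifies the same preservation of $D(x)$, $D(y)$ and $xy$, and the paper's proof of Proposition~\ref{non-empty part of L} then finishes with the same relabelling conjugation $(u,u)\cdot_r$ by a couple-preserving $u\in B_\infty$. Your chain/cycle bookkeeping is sound but not needed, since the single uniform move works for any $i\notin CS$ with $y(i)\neq i$; also, one aside should be corrected: a permutation supported on a union of couples of $U$ is not automatically in $B_\infty$ (it must in addition map couples to couples, as your specific choices of $b$ indeed do).
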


Our next observation is crucial for the proof of Proposition~\ref{non-empty part of L}.
\begin{Lemma}\label{shrinking S}
Let $(x,y)\in S_{2n}\times S_{2n}$. Then there exists $(x',y')$ in the reverted $B_n$-orbit of $(x,y)$ 
such that $S(x',y')\subseteq CS(x',y')=CS(x,y)$. In particular $|S(x',y')|\leq |CS(x,y)|\leq 2m_L$.
\end{Lemma}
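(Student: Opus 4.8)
The plan is to manufacture $(x',y')$ by repeatedly stripping off, one couple at a time, the part of the support lying outside $CS(x,y)$, using only the reverted action and never enlarging the completed support. Write $N=\N\setminus CS(x,y)$ and $M=S(x,y)\cap N$; since $S(x),S(y)\subseteq[2n]$ this $M$ is finite, and the whole game is to reach $M=\emptyset$. The first preparatory observation is that $M$ is a union of couples: if $i\in M$ then $i\notin CS(x,y)$, so by Lemma~\ref{properties of CS}(2) we have $i\in S(x)\cap S(y)$, and writing $y(i)=j\neq i$, Lemma~\ref{properties of CS}(3) gives $y(t(i))=t(j)\neq t(i)$, whence $t(i)\in S(y)$ and, $CS(x,y)$ being $t$-invariant, $t(i)\in M$. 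Moreover, for a couple $D=\{i,t(i)\}\subseteq M$ we have $D\cap DS(x)=D\cap DS(y)=\emptyset$ (as $DS(x),DS(y)\subseteq CS(x,y)$), so $D\notin D(x)$ and $D\notin D(y)$, and therefore $y(D)=:D_y$ is again a couple. The delicate point, and the reason a naive ``conjugate inside $N$'' argument fails, is that $x$ and $y$ need \emph{not} preserve $N$: a couple $D\subseteq M$ may be carried into $CS(x,y)$. The couple-transposition introduced below circumvents this precisely because the product cancellation $x(y(i))=i$ lets one clear $D$ from $x$ and from $y$ at the same time.

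For the single stripping step, fix $D=\{i,t(i)\}\subseteq M$, put $j=y(i)$ and $D_y=\{j,t(j)\}=y(D)$. The crucial identity, coming from the cancellation in the product, is that $x$ sends $D_y$ back onto $D$: indeed $xy(i)=i$ gives $x(j)=i$, and applying Lemma~\ref{properties of CS}(3) to $t(i)\notin CS(x,y)$ gives $x(t(j))=t(i)$, so $x(D_y)=D$. Let $b=(i\rightarrow j)(t(i)\rightarrow t(j))$ be the couple-transposition interchanging $D$ and $D_y$ partner-to-partner and fixing every other integer; it stabilises $\mathbb D(n)$ and so lies in $B_n$ by Remark~\ref{B acts on D}. (If $D_y=D$ one instead takes $b=(i\rightarrow t(i))$, the sign flip of the single couple $D$; the argument is identical.) Apply the reverted action with $a=1$: since $b^{-1}=b$ one gets $x'=xb$ and $y'=by$, and because $x'y'=xb\,by=xy$ the product, hence $S(xy)$ and $t(S(xy))$, is unchanged. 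A direct check using $x(j)=i$, $x(t(j))=t(i)$ and the analogous identities for $y$ shows that both $x'$ and $y'$ now fix $D$ pointwise, while off $D\cup D_y$ they agree with $x$ and $y$; consequently $S(x')\subseteq S(x)$, $S(y')\subseteq S(y)$, and $D\cap S(x',y')=\emptyset$, so $M$ strictly shrinks.

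It remains to verify that this step does not move $CS$, which is where the main care is needed. Post-composition by $b\in B_n$ leaves the $\mathbb D$-support of $y$ untouched, $D(y')=D(y)$, hence $DS(y')=DS(y)$; and $D(x')=b^{-1}(D(x))$, but $b$ only swaps the two couples $D$ and $D_y$, neither of which belongs to $D(x)$ (we have $D\subseteq M$, while $x(D_y)=D$ is a couple), so $b$ fixes $D(x)$ and $DS(x')=DS(x)$. Combining, $CS(x',y')=S(xy)\cup t(S(xy))\cup DS(x')\cup DS(y')=CS(x,y)$. Iterating the step finitely many times, once per couple of $M$, produces $(x',y')$ in the reverted $B_n$-orbit of $(x,y)$ with $S(x',y')\subseteq CS(x',y')=CS(x,y)$; the concluding inequality then follows from $|CS(x,y)|\le|S(xy)|+|t(S(xy))|+|DS(x)|+|DS(y)|=2m_L$, the last equality being \eqref{magnitude of a reverted orbit}. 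I expect the bookkeeping in the previous paragraph, namely showing that a single step reduces $M$ rather than merely relocating support and that no new $\mathbb D$-support is created, to be the real obstacle; the identity $x(D_y)=D$ together with $D,D_y\notin D(x)$ is exactly what makes both of these work.
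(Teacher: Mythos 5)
Your proof is correct and follows essentially the same route as the paper's: you strip one couple $D=\{i,t(i)\}$ of support outside $CS(x,y)$ at a time via the reverted action $(\mathrm{id},b)\cdot_r(x,y)$ with the same element $b=(i\,j)(t(i)\,t(j))$ (resp.\ $(i\,t(i))$ in the degenerate case $y(i)=t(i)$) used in the paper, verify that $x'y'=xy$ and that the $\mathbb{D}$-supports are unchanged, and induct. The only blemish is the parenthetical claim that $y'=by$ agrees with $y$ off $D\cup D_y$ (false in general, since $y'$ differs from $y$ on $D\cup y^{-1}(D)$), but it is harmless: what you actually need, and do in effect use, is that $b$ is supported on $D\cup D_y\subseteq S(x)\cap S(y)$, which already gives $S(x')\subseteq S(x)$, $S(y')\subseteq S(y)$, and the strict shrinking of $M$.
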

\begin{proof}
Let $i\notin CS(x,y)$ and $y(i)=j\neq i$. We proceed by showing that there exists 
$(x_0,y_0)\in B_n(x,y)B_n$ such that
\begin{enumerate}
\item $x_0(i)=y_0(i)=i$;
\item $x_0y_0=xy$;
\item $CS(x_0,y_0)=CS(x,y)$.
\end{enumerate}
Observe that once we prove the  existence of such an element 
the result then follows by induction. 

By Lemma~\ref{properties of CS}, we have the following identities:
\begin{eqnarray*}
xy(i) & = & i \\
x(j) & = & i \\
y(t(i)) & = & t(y(i))=t(j)\\
x(t(j)) & = & t(x(j))
\end{eqnarray*} 
On the other hand, there are two cases. Either $j=t(i)$, or not. 
For $j\neq t(i)$, we set $b=(ij)(t(i)t(j))$ and set $(x_0,y_0)$ 
to be $(id,b)\cdot_r(x,y)$. Then $(x_0,y_0)$ satisfies 
the properties 1.--3., listed above. Indeed, the first two properties are trivially satisfied. 
As $x_0y_0=xy$, it follows that $S(x_0y_0)=S(xy)$. 
Hence, in order to show $CS(x,y)=CS(x_0,y_0)$, it suffices to show that $D(x,y)=D(x_0,y_0)$,
or that $D(x)=D(x_0)$ and $D(y)=D(y_0)$. 
As $x_0$ and $x$ are in the same $B_\infty$-double coset, it follows that $|D(x)|=|D(x_0)|$. 
So, it suffices to show that $D(x)$ is a subset of $D(x_0)$.
To this end, let $\{r,t(r)\}\in D(x)$, hence $tx(r)\neq xt(r)$. 
Since $x_0(l)=x(l)$ for any $l\neq i,j,t(i),t(j)$, it follows that 
$x_0(r)=x(r)\neq x(t(r))=x_0(t(r))$, hence $\{r, t(r)\} \in D(x_0)$. 
Finally, it is easy to check that the elements $\{i,t(i)\}$ and $\{j,t(j)\}$ are not contained in $D(x)$. 
Therefore, we see that $D(x)\subseteq D(x_0)$. Notice the same line of arguments
apply to $D(y)$ and $D(y_0)$. In conclusion, when $j\neq t(i)$, we have $CS(x_0,y_0)=CS(x,y)$. 

For the case $j=t(i)$ we use $b=(ij)$ to define $(x_0,y_0):=(id,b)\cdot_r(x,y)$.
The rest of the argument is identical with that of the previous case, 
and therefore, the proof is complete.

\end{proof}

Now we are ready to prove Proposition \ref{non-empty part of L}.
\begin{proof}[Proof of Proposition \ref{non-empty part of L}.]

By using Lemma~\ref{shrinking S}, we choose an element $(x,y)$ from $L$ such that 
$S(x,y)\subseteq CS(x,y)$. Recall that $m=|CS(x,y)|\leq m_L$. Listing the elements 
of $CS(x,y)$ in partners as follows $i_1,t(i_1),\dots,i_{m},t(i_{m})$ with $i_j<i_{j+1}$, 
we define an injection $u:CS(x,y) \rightarrow S_{2m_L}$ by sending $i_j$ to $2j-1$, 
and by sending $t(i_j)$ to $2j$ and keeping other integers stable. Obviously, $u$ is an element 
of $B_{\infty}$ and it satisfies 
$(u,u)\cdot_r(x,y)\in S_{2m_L}$, hence, the proof is complete. 

\end{proof}

\begin{Corollary}\label{C:main corollary}
Let $\mu,\lambda,\nu$ be three partitions. Then $V=V(K_{\mu}\times K_{\lambda};K^{w(\nu)}_{\nu})$
is a finite union of reverted $B_{\infty}$ orbits.
\end{Corollary}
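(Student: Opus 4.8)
The plan is to show first that $V$ is stable under the reverted action of $B_\infty \times B_\infty$, so that it is automatically a (possibly infinite) union of reverted orbits, and then to bound the number of such orbits by controlling their magnitudes. For stability I would take $(x,y)\in V$ and $(a,b)\in B_\infty\times B_\infty$ and verify that $(a,b)\cdot_r(x,y)=(axb^{-1},bya^{-1})$ again lies in $V$. The first two membership conditions, $axb^{-1}\in K_\mu$ and $bya^{-1}\in K_\lambda$, are immediate from the fact that $K_\mu$ and $K_\lambda$ are entire $B_\infty$-double cosets. For the third, the product telescopes to $(axb^{-1})(bya^{-1})=a(xy)a^{-1}$, and since $K^{w(\nu)}_\nu$ is a full $B_\infty$-conjugacy class by Corollary~\ref{weight conjugacy class}, the conjugate $a(xy)a^{-1}$ stays inside $K^{w(\nu)}_\nu$. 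Thus $V$ is reverted-stable and equals the union of the reverted orbits it contains.

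The crux is then to prove that only finitely many orbits occur, and this is where the passage from $K_\nu$ to $K^{w(\nu)}_\nu$ does the work. I would compute the magnitude $m_L$ of an arbitrary orbit $L\subseteq V$ directly from equation~(\ref{magnitude of a reverted orbit}). Picking any $(x,y)\in L$, the product $xy$ lies in $K^{w(\nu)}_\nu$, so $|S(xy)|=|t(S(xy))|=w(\nu)$, the map $t$ being a bijection. Applying Lemma~\ref{magnitude of a coset} to $x\in K_\mu$ and to $y\in K_\lambda$ gives $|DS(x)|=2w(\mu)$ and $|DS(y)|=2w(\lambda)$. Substituting into $2m_L=|S(xy)|+|t(S(xy))|+|DS(x)|+|DS(y)|$ yields
\[
m_L=w(\mu)+w(\lambda)+w(\nu),
\]
a single constant independent of the orbit $L$.

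Finally I would invoke Proposition~\ref{non-empty part of L}: every reverted orbit $L$ meets $S_{2m_L}\times S_{2m_L}$ non-trivially. Setting $M=w(\mu)+w(\lambda)+w(\nu)$, every orbit contained in $V$ therefore has a representative in the finite set $S_{2M}\times S_{2M}$. Distinct orbits meet this set in disjoint non-empty subsets, so there can be at most $|S_{2M}\times S_{2M}|$ of them; combined with the fact that $V$ is a union of reverted orbits, this shows $V$ is a finite union of reverted $B_\infty$-orbits.

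The main obstacle, and the conceptual heart of the fix, is the magnitude computation: one must see that restricting the product $xy$ to the support-minimal class $K^{w(\nu)}_\nu$ rather than to all of $K_\nu$ is exactly what pins $|S(xy)|$ down to the constant $w(\nu)$. Had we used $K_\nu$, the term $|S(xy)|$ could grow without bound as $n$ increases---precisely the failure exhibited in Section~3 by the element $((2n+1\;\,2n+2)x,y)$---and the magnitudes $m_L$, hence the number of orbits, would be unbounded. Once this constancy is in hand, everything else is bookkeeping.
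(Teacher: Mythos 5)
Your proposal is correct and follows essentially the same route as the paper's own proof: stability of $V$ under the reverted action via Corollary~\ref{weight conjugacy class}, the computation $m_L = w(\mu)+w(\lambda)+w(\nu)$ from Lemma~\ref{magnitude of a coset} together with $|S(xy)|=w(\nu)$, and finiteness via Proposition~\ref{non-empty part of L} giving each orbit a representative in $S_{2M}\times S_{2M}$. The only difference is expository: you make explicit the telescoping $(axb^{-1})(bya^{-1})=a(xy)a^{-1}$ and the disjointness argument bounding the orbit count, both of which the paper leaves implicit.
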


\begin{proof}
Let $(a,b)\in B_{\infty}^2$ and $(x,y)\in V$. 
Clearly, $(a,b)\cdot_r(x,y)=(axb^{-1},bya^{-1})\in K_{\mu}\times K_{\lambda}$. 
As $xy\in K_{\nu}$, by Corollary~\ref{weight conjugacy class}, 
$axb^{-1}\cdot bya^{-1}=axya^{-1}\in K^{w(\nu)}_{\nu}$. 
Therefore, $V$ is closed under the reverted action. 
Let $L$ be the orbit containing $(x,y)\in V$. 
Then by using Lemma \ref{magnitude of a coset} and the fact that $xy\in K^{w(\nu)}_{\nu}$, 
we compute:
\begin{eqnarray*}
2m_L  =  |S(xy)|+|t(S(xy))|+|DS(x)|+|DS(y)|
= 2\big(w(\nu)+w(\mu)+w(\lambda)\big).
\end{eqnarray*}
Therefore, we conclude that the magnetite $m_L$ does not depend on the orbit $L$, 
so we denote it by $m_V$. 
By Proposition \ref{non-empty part of L}, $L$ contains an element from $S_{2m_V}\times S_{2m_V}$. 
By repeating this argument for each orbit $L$ of $V$, we see that the orbits 
of $V$ are parametrized by a subset of $S_{2m_V}\times S_{2m_V}$, hence there are only 
finitely many of them.
\end{proof}

Let $\mu,\lambda$, and $\nu$ be partitions. By definition, the integer $b^{\nu}_{\mu\lambda}(n)$ 
is defined as the coefficient of $\sum_{z\in K_\nu(n)} z$ in the product 
$\Big(\sum_{x\in K_{\mu}(n)}x\Big) \cdot\Big(\sum_{y\in K_{\lambda}(n)}y\Big)$.
Equivalently, $b^{\nu}_{\mu\lambda}(n)$ is the number of couples 
$(x,y)\in K_{\mu}(n)\times K_{\lambda}(n)$ whose product $xy$ lies in $K_{\nu}(n)$
divided by $|K_{\nu}(n)|$. 
\begin{Lemma}\label{main lemma}
Let $\mu,\lambda$, and $\nu$ be partitions. Then
\begin{equation}
b^{\nu}_{\mu\lambda}(n)=\frac{|V(K_{\mu}(n)\times K_{\lambda}(n);K^{w(\nu)}_{\nu}(n))|}{|K^{w(\nu)}_{\nu}(n)|}.
\end{equation}
\end{Lemma}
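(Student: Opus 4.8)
The plan is to reduce everything to the single fact that the fibre count
$$N(z):=|\{(x,y)\in K_\mu(n)\times K_\lambda(n):\ xy=z\}|$$
is constant as $z$ ranges over the double coset $K_\nu(n)$, and then merely to restrict the summation defining $b^\nu_{\mu\lambda}(n)$ from $K_\nu(n)$ to its minimal-support slice $K^{w(\nu)}_\nu(n)$. Recall from the paragraph preceding the statement that, by definition, $b^\nu_{\mu\lambda}(n)$ is the coefficient of $\sum_{z\in K_\nu(n)}z$ in the group-algebra product $\Big(\sum_{x\in K_\mu(n)}x\Big)\Big(\sum_{y\in K_\lambda(n)}y\Big)=\sum_{z}N(z)\,z$. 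The very meaningfulness of ``the'' coefficient of $\sum_{z\in K_\nu(n)}z$ is exactly the assertion that $N$ is constant on $K_\nu(n)$, with common value $b^\nu_{\mu\lambda}(n)$, so this constancy is the one fact I need to make explicit.

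First I would record the constancy directly. Take $z,z'\in K_\nu(n)$; since $K_\nu(n)$ is a full $B_n$-double coset, we may write $z'=azb$ for some $a,b\in B_n$. Because $K_\mu(n)$ and $K_\lambda(n)$ are themselves full $B_n$-double cosets, they are stable under left and under right multiplication by $B_n$. Hence the map $(x,y)\mapsto(ax,yb)$ carries $K_\mu(n)\times K_\lambda(n)$ into itself, and whenever $xy=z$ it yields $(ax)(yb)=a(xy)b=azb=z'$; its inverse $(x',y')\mapsto(a^{-1}x',y'b^{-1})$ is of the same form. This is therefore a bijection between the factorisations of $z$ and those of $z'$ inside $K_\mu(n)\times K_\lambda(n)$, so $N(z)=N(z')$, and $N$ is constant on $K_\nu(n)$.

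Next I would restrict the sum. By the definition of $K^m_\nu$ we have $K^{w(\nu)}_\nu(n)\subseteq K_\nu(n)$, so the constant value $N(z)=b^\nu_{\mu\lambda}(n)$ applies verbatim to every $z\in K^{w(\nu)}_\nu(n)$. Summing over this subset gives
$$|V(K_\mu(n)\times K_\lambda(n);K^{w(\nu)}_\nu(n))|=\sum_{z\in K^{w(\nu)}_\nu(n)}N(z)=b^\nu_{\mu\lambda}(n)\cdot|K^{w(\nu)}_\nu(n)|,$$
and dividing by $|K^{w(\nu)}_\nu(n)|$ yields the asserted identity. The division is legitimate precisely when $K^{w(\nu)}_\nu(n)\neq\emptyset$, that is, when $n\geq w(\nu)$: the explicit cycle product of Example~\ref{main example} lies in $K^{w(\nu)}_\nu\cap S_{2w(\nu)}\subseteq K^{w(\nu)}_\nu(n)$, witnessing nonemptiness.

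I do not expect a genuine obstacle, since the whole content is the standard Hecke-algebra fact that the fibre count is constant along a double coset. The single point deserving care — and the place where the ``fix'' of the paper really enters — is the observation that the minimal-support slice $K^{w(\nu)}_\nu(n)$ still sits inside the one double coset $K_\nu(n)$, so that the same constant $b^\nu_{\mu\lambda}(n)$ governs it. This is exactly what permits replacing $K_\nu(n)$ by $K^{w(\nu)}_\nu(n)$ without disturbing the structure constant, while (by Corollary~\ref{C:main corollary}) turning the numerator into a finite union of reverted $B_\infty$-orbits, which is what the subsequent polynomiality argument requires.
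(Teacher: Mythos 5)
Your proof is correct and is essentially the paper's argument spelled out in full: the paper dismisses the lemma as ``immediate'' from the disjoint union $K_\nu(n)=\bigcup_{k\geq 1}K^k_\nu(n)$, taking for granted exactly the fact you make explicit, namely that the fibre count $N(z)$ is constant on the full double coset $K_\nu(n)$ (via translation by $B_n\times B_n$), so that summing over the slice $K^{w(\nu)}_\nu(n)\subseteq K_\nu(n)$ gives $b^\nu_{\mu\lambda}(n)\cdot|K^{w(\nu)}_\nu(n)|$. Your additional verification of nonemptiness of the slice for $n\geq w(\nu)$ is a welcome detail the paper leaves implicit, but it does not change the route.
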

\begin{proof}
Immediate from the fact that $K_\nu (n)=\bigcup_{k\geq 1} K^k_\nu(n)$ is a disjoint union.
\end{proof}

Finally, we compute the size of the relevant orbit. 
\begin{Lemma}\label{orbit cardinality}
Let $L$ be a reverted orbit and $\nu$ be a partition. Then
\begin{enumerate}
\item There is a constant $k_L$ such that
$|L(n)|=\frac{(2^nn!)^2}{k(L)(2^{n-m_L}(n-m_L)!)}$.
\item There is a constant $k_{\nu}$ such that
$|K_w(\mu)^\nu (n)| = \frac{2^nn!}{ k_{\nu}p(2^{n-w(\nu)}(n-w(\nu))!)}$.
\end{enumerate}
\end{Lemma}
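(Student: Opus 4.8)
The plan is to prove both cardinalities by the orbit--stabilizer theorem, using that $|B_n|=2^nn!$ and that, once $n$ is large, the relevant $n$-parts are \emph{single} orbits so that no summation over orbits is needed. I treat Part 1 first and obtain Part 2 by the same template with a single hyperoctahedral group in place of $B_n\times B_n$.

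For Part 1, observe that $B_n\times B_n$ acts on $S_{2n}\times S_{2n}$ by the reverted action $\cdot_r$, and this action preserves both $S_{2n}^2$ and the orbit $L$, so it restricts to $L(n)=L\cap(S_{2n}\times S_{2n})$. First I would show that, for $n\geq m_L$, the set $L(n)$ is a single reverted $B_n\times B_n$-orbit. Fix a base point $(x_0,y_0)\in L(m_L)$, which exists by Proposition~\ref{non-empty part of L} and which lies in $S_{2m_L}^2\subseteq S_{2n}^2$. Given an arbitrary $(x',y')\in L(n)$, Lemma~\ref{shrinking S} produces an element of its reverted $B_n$-orbit whose support lies in $CS(x',y')$, of cardinality at most $2m_L$; relabelling the couples of $CS(x',y')$ onto $D_1,\dots,D_{m_L}$ by a single element of $B_n$, exactly as in the proof of Proposition~\ref{non-empty part of L}, carries it into $S_{2m_L}^2$. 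Thus every $B_n\times B_n$-orbit in $L(n)$ meets $S_{2m_L}^2$, and it remains to check that $L\cap S_{2m_L}^2$ is a single $B_n^2$-orbit, which I would deduce from the support lemma applied to the now bounded conjugating data.

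With transitivity in hand, orbit--stabilizer gives $|L(n)|=|B_n\times B_n|/|\mathrm{Stab}(x_0,y_0)|=(2^nn!)^2/|\mathrm{Stab}(x_0,y_0)|$, so the whole content is to show $|\mathrm{Stab}(x_0,y_0)|=k_L\cdot 2^{n-m_L}(n-m_L)!$ with $k_L$ independent of $n$. A pair $(a,b)\in B_n^2$ stabilizes $(x_0,y_0)$ precisely when $ax_0=x_0b$ and $by_0=y_0a$. Since $x_0,y_0$ fix every couple $D_j$ with $j>m_L$ pointwise, those $(a,b)$ that fix $\{1,\dots,2m_L\}$ pointwise are forced to satisfy $a=b$ on the remaining couples (and $a=b$ then commutes with $x_0,y_0$ by disjointness of supports), so they form the diagonal copy of $B_{n-m_L}$ acting on $D_{m_L+1},\dots,D_n$, of order $2^{n-m_L}(n-m_L)!$. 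This diagonal subgroup is normal in the stabilizer, and the quotient is the local stabilizer of $(x_0,y_0)$, a finite group whose order $k_L$ depends only on $L$ and not on $n$ once $n\geq m_L$; multiplying the two factors yields the formula. Part 2 is the conjugation analog: by Corollary~\ref{weight conjugacy class} the set $K^{w(\nu)}_{\nu}$ is a full $B_\infty$-conjugacy class, the same support reduction shows $K^{w(\nu)}_{\nu}(n)$ is a single $B_n$-conjugacy class for $n\geq w(\nu)$, and choosing $z$ with $|S(z)|=w(\nu)$ (so $S(z)\cap tS(z)=\emptyset$ by Corollary~\ref{C:see above}, and $z$ fixes the $n-w(\nu)$ outer couples pointwise), orbit--stabilizer gives $|K^{w(\nu)}_{\nu}(n)|=2^nn!/|C_{B_n}(z)|$ with $|C_{B_n}(z)|=k_\nu\cdot 2^{n-w(\nu)}(n-w(\nu))!$, the outer couples contributing the full factor $B_{n-w(\nu)}$ and the bounded support region the $n$-independent factor $k_\nu$.

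The main obstacle is not the orbit--stabilizer bookkeeping but the stabilizer analysis, and it is twofold. First, establishing transitivity of $B_n^2$ on $L(n)$ (and of $B_n$ on $K^{w(\nu)}_{\nu}(n)$) must be done carefully through the support-shrinking results, because the reverted action is two-sided and the naive support lemma does not apply verbatim. Second, and more delicate, is justifying that the stabilizer genuinely factors as a normal diagonal $B_{n-m_L}$ extended by a local group of $n$-independent order $k_L$: one must rule out stabilizing pairs that entangle the bounded support region with the free tail in an $n$-dependent way, and this is exactly where the bound $|S(x_0,y_0)|\leq 2m_L$ and the disjointness $S(z)\cap tS(z)=\emptyset$ do the real work. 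Once the tail is pinned down as precisely the diagonal $B_{n-m_L}$, the $2^{n-m}(n-m)!$ denominators emerge, and with them the polynomial-in-$n$ shape of both cardinalities.
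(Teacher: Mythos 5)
Your proposal follows the paper's own skeleton: orbit--stabilizer for $B_n\times B_n$, transitivity on the $n$-part, and a factorization of the stabilizer into an $n$-independent local piece times a hyperoctahedral tail. The first genuine gap is the transitivity step. The paper transports $L$ to the straightforward orbit $L'=\phi^{-1}(L)$ via the equivariant bijection (\ref{action equivalance}) and then \emph{cites} [Lemma 5.2, \cite{AC12}], which says that $L'(n)$ is a single straightforward $B_n\times B_n$-orbit; that citation is doing real work. Your replacement --- that $L\cap(S_{2m_L}\times S_{2m_L})$ is a single $B_n^2$-orbit, ``which I would deduce from the support lemma applied to the now bounded conjugating data'' --- is not a proof: the Farahat--Higman support lemma recalled in the preliminaries bounds a conjugator for the conjugation action of $S_\infty$, and says nothing about bounding a pair $(a,b)$ for the two-sided reverted action of $B_\infty\times B_\infty$. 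That is precisely the nontrivial content being outsourced, and a blind proof must either supply it or cite it.

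The second gap is fatal as stated: the claim that the stabilizer of $(x_0,y_0)$ is an extension of the diagonal $B_{n-m_L}$ (on the couples $D_{m_L+1},\dots,D_n$) by a group of $n$-independent order $k_L$. The trouble is that $2m_L$ in (\ref{magnitude of a reverted orbit}) is a \emph{sum} of cardinalities, not the cardinality of $CS(x_0,y_0)$; when the constituent sets overlap, some couples $D_j$ with $j\le m_L$ are fixed pointwise by both $x_0$ and $y_0$, and stabilizing pairs entangle them with the tail. Concretely, take $x_0=y_0=(1\,3)$, so that $L=\{(u,u^{-1}):u\in K_{(1)}\}$, $x_0y_0=\mathrm{id}$, $DS(x_0)=DS(y_0)=\{1,2,3,4\}$, hence $m_L=4$, while $CS(x_0,y_0)=\{1,2,3,4\}$. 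A pair $(a,b)\in B_n^2$ stabilizes $(x_0,y_0)$ iff $b=(1\,3)a(1\,3)$ with both in $B_n$, so the stabilizer is isomorphic to $B_n\cap(1\,3)B_n(1\,3)$, which is the Klein four-group on $\{1,2,3,4\}$ times the full hyperoctahedral group on $\{5,\dots,2n\}$, of order $4\cdot2^{n-2}(n-2)!$; accordingly $|L(n)|=|K_{(1)}(n)|=(2^nn!)^2/\bigl(4\cdot2^{n-2}(n-2)!\bigr)$, and forcing this into the displayed formula with $m_L=4$ gives $k_L=16(n-2)(n-3)$, which grows with $n$. Your proposed diagonal $B_{n-4}$ is neither normal in this stabilizer nor of bounded index: the pair $(c,c)$ with $c=(5\,9)(6\,10)$, which swaps the pointwise-fixed couple $D_3$ with the tail couple $D_5$, stabilizes $(x_0,y_0)$. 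So the entanglement you flag as ``to be ruled out'' genuinely cannot be ruled out at the level of $m_L$; the tail must be anchored to the completed support of a well-chosen representative (here $CS=\{1,2,3,4\}$, giving the true constants $4$ and exponent $n-2$), and one must then \emph{prove} that the stabilizer preserves that set. Be aware that the paper's own one-line assertion ``Observe that the stabilizer splits'' (and hence the displayed formula of the Lemma itself, with $m_L$ as defined) suffers from the same defect, so on this point you have reproduced the paper's gap rather than filled it; the main theorem is not endangered, since polynomiality only requires \emph{some} $n$-independent pair of constants per orbit, but that corrected statement is what actually has to be proved.
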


\begin{proof}
The proof of 1. follows closely the one that is presented in~\cite{AC12} with a minor modification. 
Let $L'=\phi^{-1}(L)$ be the straightforward orbit corresponding to $L$, where 
$\phi$ is as in (\ref{action equivalance}).
Then $|L(n)|=|L'(n)|$, therefore, it is enough to calculate the cardinality of the $n$-part 
of a straightforward orbit. We need to use the following result [Lemma 5.2,~\cite{AC12}] that 
$L'(n)$ is a straightforward $B_{n}\times B_{n}$-orbit inside $S_{2n}\times S_{2n}$. 
Also, since by Proposition \ref{non-empty part of L} $L(m_L) \subset S_{2n}\times S_{2n}$ is non-empty,  
there exists  $(x',y')\in L'(n)$ such that $x'$ and $y'$ fixes integers $i$ with $i>2m_L$. 

Observe that the stabilizer in $B_n\times B_n$ of such $(x', y')$ splits: 
$$
\text{Stab}_{B_{n}\times B_{n}}((x',y)') 
= \text{Stab}_{B_{m_L} \times B_{m_L}} (z) \times \text{Stab}_{B_{n-m_L} \times B_{n-m_L}},
$$ 
where $B_{n-m_L}$ stands for the hyperoctahedral group on the set $[2n] \setminus [2m_L]$. 
It follows from definitions that $\text{Stab}_{B_{n-m_L} \times B_{n-m_L}} \cong B_{n-m_L}$. 
Therefore, if $k(L)$ denotes the number of elements of the first factor, then the number of 
elements of the orbit $L(n)$ is $(2^n n!)^2 / k(L) 2^{n-m_L} (n- m_L)!$.

For 2., we use the idea that is used in~\cite{FH}. Let $x\in K^{w(\nu)}(n)$ so that $x\in S_{2w(\nu)}$ 
(see Example \ref{main example}). Then by Corollary \ref{weight conjugacy class} $K^{w(\nu)}_{\mu}(n)$ 
is equal to the $B_n$ conjugacy class of $x$ in $S_{2n}$. So we need to calculate the $B_n$ conjugacy 
class of $x$. It is equal to $|B_n|/|C_{B_n}(x)|$, where $C_{B_n}(x)$ is the centralizer of $x$ in $B_n$ 
which is the intersection of $B_n$ with $C_{S_{2n}}(x)$. 
On the other hand, $C_{S_{2n}}(x)$ is equal to the direct product 
of the centralizer of $x$ in $S_{2w(\nu)}$ and the symmetric group complement to $S_{2w(\nu)}$ in 
$S_{2n}$. By the same reasoning, the centralizer of $x$ in $B_n$ is the direct product of the centralizer 
of $x$ in $B_{w(\nu)}$ and the hyperoctahedral subgroup in the symmetric group that is complement to 
$S_{2w(\nu)}$ in $S_{2n}$, which is isomorphic to $B_{n-w(\nu)}$. Then, if $k_{\nu}$ denotes the number 
of elements in the centralizer of $x$ in $B_{w(\nu)}$ the number of elements in the centralizer of $x$ in 
$B_n$ is equal to $k_{\nu}|B_{n-w(\nu)}|$. The result now follows.

\end{proof}

We are ready to fill the gap.  
\begin{proof}[Proof of Theorem \ref{main theorem}]
Let $L_1,\dots,L_r$ be the list of all reverted orbits contained in 
$V=V(K_{\mu}\times K_{\lambda};K_{\nu}^{w(\nu)})$. 
By Corollary~\ref{C:main corollary}, we know that 
$$
V(n):=V(K_{\mu}(n)\times K_{\lambda}(n);K^{w(\nu)}_{\nu}(n)) = L_1(n)\cup\cdots\cup L_r(n).
$$
Thus, by Lemma~\ref{main lemma}, it is enough to compute 
$b^{\nu}_{\mu\lambda}(n)=\sum_{i=1}^r\frac{|L_i(n)|}{|K^{w(\nu)}_{\nu}(n)|}$.
For $i=1,\dots, r$
\begin{align*}
\frac{|L_i|}{|K^{w(\nu)}_{\nu}(n)|} &=  
\frac{(2^nn!)^2}{k(L_i)(2^{n-m_L}(n-m_L)!)}\cdot \frac{k_{\nu}(2^{n-w(\nu)}(n-w(\nu))!)}{2^{n}n!}\\ 
&= 2^{n-w(\nu)}n! \frac{2^{m_{L_i}}\Big(n(n-1)\cdots (n-m_{L_i}+1)\Big)\Big(n(n-1)
\cdots (n-w(\nu)+1)\Big)}{k(L_i)},
\end{align*}
where $k_\nu$ and $k(L_i)$ are as in Lemma~\ref{orbit cardinality}.
It follows that, for $i=1,\dots, r$, the expressions $f_i(n):=\frac{ |L_i(n)|}{|K^{w(\nu)}_{\nu}(n)| 2^{(n-w(\nu))} n!}$
are polynomials in $n$, hence, so is their sum. Since 
$b^{\nu}_{\mu\lambda}(n)=2^{n-w(\nu)} n! \sum_{i=1}^rf_i(n)$,
the proof is complete. 
\end{proof}

\begin{Remark}
Observe that if we normalize the characteristic function $\chi_i(n)$ of the double 
coset $\overline{x}_i$ (using the notation of Introduction) by 
$\chi_i'(n) = \frac{1}{2^{n-1} n!} \chi_i(n)$, 
then the corresponding structure constants $b_{ij}^k(n)$'s become polynomials in $n$. 
\end{Remark}

\end{document}